\documentclass{amsart}

\usepackage{amsthm,amsmath,graphicx,epstopdf}

\newcommand{\R}{{\mathbb R}}

\theoremstyle{plain}
\numberwithin{equation}{section}
\newtheorem{thm}{Theorem}
\newtheorem{theorem}[thm]{Theorem}
\newtheorem{lemma}[thm]{Lemma}

\newtheorem{prop}[thm]{Proposition}

\newtheorem{corollary}[thm]{Corollary}

\theoremstyle{definition}

\newtheorem{remark}[thm]{Remark}

\title[Braiding link cobordisms and non-ribbon surfaces]{Braiding link cobordisms and non-ribbon surfaces}

\author{Mark C. Hughes}
\email{hughes@mathematics.byu.edu}

\begin{document}

\maketitle

\begin{abstract} 
We define the notion of a braided link cobordism in $S^3 \times [0,1]$, which generalizes Viro's closed surface braids in $\R^4$.  We prove that any properly embedded oriented surface $W \subset S^3 \times [0,1]$ is isotopic to a surface in this special position, and that the isotopy can be taken rel boundary when $\partial W$ already consists of closed braids.  These surfaces are closely related to another notion of surface braiding in $D^2 \times D^2$, called braided surfaces with caps, which are a generalization of Rudolph's braided surfaces.  We mention several applications of braided surfaces with caps, including using them to apply algebraic techniques from braid groups to studying surfaces in 4-space, as well as constructing singular fibrations on smooth 4-manifolds from a given handle decomposition.  
\end{abstract}

\section{Introduction}
\label{sec:Introduction}

\sloppypar{Two of the most useful and foundational results in knot theory and low-dimensional topology are the classical theorems of Alexander and Markov.  These theorems allow us to study knots entirely within the realm of braids and braid closures, where we can exploit either the algebraic structure of the braid group, the special position of a closed braid in $S^3$, or the fact that braids with isotopic closures can be related by special braid moves.  These results have been used in numerous applications, examples of which include the construction and categorification of quantum link invariants \cite{HOMFLY,Jones,KRII}, the construction of open book decompositions on 3-manifolds \cite{Alexanderopenbook}, and studying the slice and ribbon genera of knots \cite{Rudolph1983,Rudolph1993}.}

The notion of a closed braid as a specially positioned 1-dimensional submanifold of 3-dimensional space has been generalized by different authors to certain classes of surfaces in 4-space.  One such generalization is due to Rudolph \cite{Rudolph1983}, who considered surfaces $S \subset D^2 \times D^2$ on which the projection to the second factor $\mathrm{pr}_2: D^2 \times D^2 \rightarrow D^2$ restrict as branched coverings.  This generalizes the classical notion of a (geometric) braid as a 1-dimensional submanifold of $D^2 \times [0,1]$, on which the projection $\mathrm{pr}_{[0,1]}:D^2 \times [0,1] \rightarrow [0,1]$ restricts as an ordinary covering.  These surfaces are called \emph{braided surfaces}, and are closely related to a similar notion due to Viro \cite{virolecture}.  Any braided surface is necessarily ribbon, and Rudolph showed that every orientable ribbon surface with boundary properly embedded in $D^2 \times D^2$ is isotopic to a braided surface. 

Like their lower-dimensional counterparts, braided ribbon surfaces have found use in various applications, including finding obstructions to sliceness in knot theory \cite{Rudolph1993}, the study of Stein fillings of contact 3-manifolds, and the construction of Lefschetz fibrations on 4-dimensional 2-handlebodies (i.e., 4-manifolds admitting handle decompositions with no 3 or 4-handles).  Indeed, using the fact that any oriented 4-dimensional 2-handlebody $X$ admits a covering over $D^2 \times D^2$ branched along an orientable ribbon surface, Loi and Piergallini \cite{LoiandPiergallini} were able to construct Lefschetz fibrations on $X$, and used them to give a topological characterization of Stein surfaces with boundary.

As Rudolph's braided surfaces do not include non-ribbon surfaces, the above techniques were not sufficient for studying smooth 4-manifolds with 3 or 4-handles.  Indeed, the branched coverings of such manifolds over $D^2 \times D^2$ do not have ribbon branch loci. Expanding these applications thus requires a more general notion of braided surface.

In this paper we generalize these notions further, by defining \emph{braided link cobordisms} (or simply \emph{braided cobordisms}).  These are surfaces $W \subset S^3 \times [0,1]$ smoothly and properly embedded, on which the projection $\mathrm{pr}_2:S^3 \times [0,1] \rightarrow [0,1]$ restricts as a Morse function, with each regular level set $W \cap (S^3 \times \{t\})$ a closed braid in $S^3 \times \{t\}$.  Braided cobordisms generalize Viro's closed 2-braids to oriented surfaces with boundary.  We prove the following:

\begin{theorem}
\label{thm:AlexanderMainTheorem}
Let $W \subset S^3 \times [0,1]$ be an oriented surface smoothly and properly embedded.  Then $W$ is isotopic to a braided cobordism.  If the boundary links of $\partial W$ are already closed braids, then this isotopy can be chosen rel $\partial W$. 
\end{theorem}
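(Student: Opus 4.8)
The plan is to pass to the ``movie'' of links cut out by the level sets of $h := \mathrm{pr}_2|_W$ and to braid that movie step by step, using the classical Alexander theorem on individual levels, the Markov theorem to reconcile adjacent levels, and explicit braided models for the births, deaths and band moves occurring at critical levels. First I would normalise position: a generic arbitrarily small isotopy makes $h$ a Morse function with distinct critical values $c_1 < \dots < c_n$ in $(0,1)$ and no critical points on $\partial W$, and if $\partial W$ already consists of closed braids this is done rel $\partial W$. If instead $\partial W$ is not assumed braided, I would first apply the Alexander theorem to each boundary link and extend the braiding isotopies over collars of $S^3 \times \{0\}$ and $S^3 \times \{1\}$, reducing to the case in which $\partial W$ consists of closed braids and every subsequent isotopy is rel $\partial W$. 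Choosing regular values interleaving the $c_i$ exhibits $W$ as a path of links $L_t \subset S^3$ that vary by ambient isotopy of $S^3$ on each critical-free subinterval and undergo a single birth, death or band move at each $c_i$; over a critical-free subinterval the corresponding piece of $W$ is a product cobordism, hence isotopic rel either end to a vertical cobordism $L \times I$.

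Next I would braid the movie by induction up the critical values, maintaining the invariant that $W$ below the current height has been isotoped rel $\partial W$ to a braided cobordism whose top regular level is an honest closed braid $\beta$ with respect to the fixed axis $U$. A critical-free subinterval is harmless, since the product piece there is isotopic rel its bottom to the vertical braid $\beta \times I$. To cross a critical value $c_i$ lying just above a closed braid $\beta$, I would treat three cases. If $c_i$ is a minimum, I insert a one-strand trivial unknot in a small ball contained in a single page of the open book of $S^3$ with binding $U$ and disjoint from $\beta$, a braided minimum. If $c_i$ is a maximum, the vanishing component of $L_{c_i}$ is, after a local adjustment near $c_i$, an unknotted sub-braid of $\beta$ split from the remaining strands; I Markov-destabilise it inside the splitting ball down to a one-strand unknot and cap it with a braided maximum. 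If $c_i$ is a saddle, I isotope the attaching band rel its feet on $\beta$ until it lies in a thin neighbourhood of a single page with core monotone in the angular coordinate, so that surgery along this braided band again produces a closed braid, after a Markov stabilisation of $\beta$ if necessary. What makes these Markov moves legitimate is that a Markov stabilisation is itself realised by a short braided cobordism of Euler characteristic $0$, namely a braided minimum followed by a braided band move; hence each stabilisation or destabilisation called for above can be inserted into a thin collar of the relevant level without disturbing the already-braided part of $W$, and the induction continues. At $t = 1$ one obtains a braided cobordism with some braid $\beta_1$ on top, and since $\beta_1$ and the prescribed braid $\partial_1 W$ have isotopic closures, the Markov theorem lets me interpolate between them by braid isotopies and Markov moves -- again realised by braided cobordism pieces -- inserted just below $S^3 \times \{1\}$.

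I expect the saddle step, together with the coherence of all these insertions, to be the main obstacle. A priori the attaching band of a saddle, and the isotopies relating consecutive levels, wind arbitrarily around the axis $U$, and straightening a band into a single page with its feet pinned to the already-braided $\beta$, or turning $L_t$ into a closed braid without creating intersections with $U \times \{t\}$, is precisely Alexander's braiding procedure applied to constrained data; it is this procedure, not a generic perturbation, that disposes of the algebraically cancelling but not locally removable intersection points of $W$ with the axis cylinder $U \times [0,1]$. Arranging that every Markov (de)stabilisation is inserted in a collar thin enough not to spoil a level already in braided position, and that the finitely many choices made at the critical levels are mutually compatible, is the technical heart of the argument; an alternative is to run Alexander's procedure on the whole movie simultaneously, at the price of tracking how the set of sub-arcs requiring modification jumps with $t$ and matching these jumps to the Morse data of $h$, which comes to the same bookkeeping. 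Once this is carried out the resulting surface is by construction a braided cobordism isotopic to $W$, rel $\partial W$ when $\partial W$ consisted of closed braids.
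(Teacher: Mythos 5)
Your outline gets the easy half right and explicitly flags the hard half, but the flag is where the proof actually needs to be, and the sketch you offer for it does not work as stated.

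The part of your proposal that is sound: reducing to $\partial W$ braided via a collar isotopy, pushing births/deaths/saddles into braided position one at a time, realizing Markov (de)stabilizations as short braided cobordisms with $\chi = 0$ (a braided extremum plus a braided band), and the observation that these insertions preserve Euler characteristic. This matches, in spirit, the first stage of the paper's argument (the paragraph citing Kamada), which moves all the extrema and saddles into three designated braided slabs. So the treatment of the critical points themselves is fine.

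The gap is your treatment of the \emph{critical-point-free} subintervals, which you dismiss as ``harmless.'' You isotope such a piece rel its \emph{bottom} only, replacing it with a vertical braid $\beta \times I$. But this changes the top level, and your repair --- inserting a stack of Markov-move cobordisms just below $S^3 \times \{1\}$ to interpolate from whatever $\beta_1$ you end up with to the prescribed $\partial_1 W$ --- is not an isotopy of $W$ rel $\partial W$; it is a new surface glued together from a vertical product and a stack of standard Markov pieces. The resulting surface need not be isotopic rel boundary to the original $W_{[a,1]}$. Figure~\ref{fig:twistspun} in the paper is exactly the counterexample: a critical-point-free cobordism between two closed braids that is not isotopic rel boundary to any product cobordism, because the bottom strand acquires full twists as $t$ varies. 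Your plan would collapse this to a vertical product and then bolt on Markov moves at the top, and nothing in your construction chooses those Markov moves so as to reproduce the twisting carried by the original surface. You acknowledge this is ``the technical heart'' and gesture at ``Alexander's procedure applied to constrained data'' and ``matching these jumps to the Morse data of $h$,'' but that sentence is precisely the missing proof, not a summary of it.

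What the paper does at this point is essentially different in mechanism. It never tries to straighten a critical-free interval to a vertical product. Instead it fixes a projection to the $xz$-plane, records the movie as a planar isotopy interspersed with Reidemeister(-like) moves, and invokes Morton's threading construction to lift each planar isotopy to a braid isotopy (Lemma~\ref{lem:braidisotopylifting}) and each passage of a strand across the axis to a \emph{simple Markov equivalence} (Lemma~\ref{lem:liftingdiagramtransformations}), with care taken to fix a consistent choice of overpasses throughout (Lemma~\ref{lem:movingoverpasses}) and to replace bad type~III moves that cannot be lifted (Figures~\ref{fig:badR3}, \ref{fig:fixbadR3a}). The simple Markov equivalences are then converted, in Proposition~\ref{prop:planarisotopy}, into pairs of new critical points (the disk $R$ decomposed into $R'$ and $R''$ as in Figure~\ref{fig:bandanddisk}). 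The crucial point you are missing is that the sequence of Markov moves is \emph{read off from the original cobordism}: the threading records exactly where $W_t$ wanders back and forth across the axis cylinder $U \times [0,1]$, and those crossings are what become the new births/deaths/saddles. Trying to defer all Markov adjustments to a collar of $S^3 \times \{1\}$, chosen a posteriori from the abstract Markov equivalence of $\beta_1$ with $\partial_1 W$, loses this information and with it the rel-boundary conclusion. To turn your sketch into a proof you would have to track, interval by interval, the algebraically cancelling intersections of $W$ with $U \times [0,1]$ and show that removing them costs only braid isotopies and simple Markov equivalences compatible with the already-braided part of $W$ --- which is what the threading machinery (or an equivalent bookkeeping device) accomplishes.
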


Theorem \ref{thm:AlexanderMainTheorem} can be thought of as a cobordism analogue to the classical Alexander's theorem, and will be proven in Section~\ref{sec:BraidingLinkCobordisms}.  Our construction will be similar to Kamada's construction of the normal braid form of a surface link \cite{kamada}, which implies our result in the case that $W$ is a closed surface.  The bulk of the additional work here will be in carrying out the construction in a way that allows us to keep $\partial W$ fixed during the required ambient isotopies.  This boundary-fixing requirement is considered with an eye toward applications (see either \cite{jacobsson} for a construction using Khovanov homology which is not invariant under general isotopies of $W$, or below for other applications).

We also define a related class of surfaces in $D^2 \times D^2$, called \emph{braided surfaces with caps}, which generalize Rudolph's braided surfaces (see Section~\ref{sec:BraidedSurfacesWithCaps}), and which are closely related to braided cobordisms.  Theorem~\ref{thm:AlexanderMainTheorem} then gives us the following:

\begin{corollary}
\label{cor:braidedsurfaceswithcaps}
Let $S$ be a smooth oriented properly embedded surface in $D^2 \times D^2$.  Then $S$ is isotopic to a braided surface with caps.  If $\partial S$ is already a closed braid, then the isotopy can be chosen rel $\partial S$.
\end{corollary}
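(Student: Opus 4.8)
The plan is to deduce the corollary from Theorem~\ref{thm:AlexanderMainTheorem} by passing between $D^2 \times D^2$ and $S^3 \times [0,1]$ through a standard radial reparametrization, and then translating the conclusion "braided cobordism" into the conclusion "braided surface with caps" via the defining correspondence between these two notions set up in Section~\ref{sec:BraidedSurfacesWithCaps}. First I would fix an identification of $D^2 \times D^2$ minus a neighborhood of $D^2 \times \{0\}$ with $S^3 \times (0,1]$ (or, more cleanly, view $D^2 \times D^2$ as the mapping cylinder of the Hopf-type projection $S^3 \to S^2$, so that radial collapse in the second $D^2$ factor sweeps out a family of copies of $S^3$). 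Under this identification the projection $\mathrm{pr}_2 : D^2 \times D^2 \to D^2$ becomes, away from the central fiber, the projection to the radial coordinate $t \in (0,1]$; a surface $S$ meeting the central fiber transversally in a closed braid then corresponds to a properly embedded surface $W \subset S^3 \times [0,1]$ together with prescribed behavior near $t=0$, and the extra "caps" are exactly the pieces of $S$ lying over a small disk around the origin of the second factor.

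Next I would apply Theorem~\ref{thm:AlexanderMainTheorem} to the surface $W$ obtained from $S$ in this way. The theorem provides an ambient isotopy of $S^3 \times [0,1]$ carrying $W$ to a braided cobordism, i.e.\ making $\mathrm{pr}_2$ restrict to a Morse function with closed-braid regular level sets. Pulling this isotopy back through the radial identification gives an ambient isotopy of $D^2 \times D^2$ (supported away from a neighborhood of the central fiber, or extended by the identity there) after which $S$ is in the position required by the definition of a braided surface with caps: on the complement of a neighborhood of the core $D^2$ it is braided over the annular region, and the caps fill in over the central disk. I would then check that this positioned surface genuinely satisfies the Section~\ref{sec:BraidedSurfacesWithCaps} definition — in particular that the Morse critical points and the braid-closure structure match the axioms for caps — which should be immediate from how the two notions were defined to correspond.

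For the relative statement, suppose $\partial S$ is already a closed braid. Under the identification, $\partial W$ then consists of closed braids (the components of $\partial S$ lying in $\partial D^2 \times D^2$ and in $D^2 \times \partial D^2$ translate to the two boundary links of $W$ in $S^3 \times \{0\}$ and $S^3 \times \{1\}$, together with the cap boundaries), so the rel-$\partial W$ clause of Theorem~\ref{thm:AlexanderMainTheorem} applies and the resulting isotopy fixes $\partial W$; transporting back, the isotopy of $D^2 \times D^2$ fixes $\partial S$ as required. The one point demanding care — and the step I expect to be the main obstacle — is the behavior near the central fiber $D^2 \times \{0\}$: the radial identification degenerates there, so I need to argue that the braided cobordism structure produced by the theorem extends over (or is compatible with) the caps without introducing uncontrolled singularities, i.e.\ that the "capping off" at $t \to 0$ can be done smoothly and in braid position. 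This is essentially a local normal-form computation near the cap disks, and I would handle it by arranging in advance (using the freedom in Theorem~\ref{thm:AlexanderMainTheorem}, or a preliminary isotopy) that $W$ is in a standard product form on a collar $S^3 \times [0,\epsilon]$, so that the caps are literally standard braided caps and nothing needs to be modified there.
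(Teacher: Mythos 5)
Your high-level strategy — transfer the problem to $S^3 \times [0,1]$, apply Theorem~\ref{thm:AlexanderMainTheorem}, and transfer back — is the right one, and it is what the paper does: the corollary is deduced by combining Theorem~\ref{thm:AlexanderMainTheorem} with Lemma~\ref{lem:braidedcobordismtobraidedsurfacewithcaps}. However, the geometric bridge you propose between $D^2 \times D^2$ and $S^3 \times [0,1]$ is not the correct one, and this is not a small detail: it is exactly what makes your flagged ``main obstacle'' into a genuine gap rather than a routine check.

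You want to identify ``$D^2\times D^2$ minus a neighborhood of $D^2\times\{0\}$'' with $S^3\times(0,1]$, with the radial coordinate coming from the second $D^2$ factor (or a mapping-cylinder picture for the Hopf map). Neither works. The complement of the central disk $D^2\times\{0\}$ in $D^2\times D^2$ is diffeomorphic to $D^2\times S^1\times(0,1]$, a thickened solid torus, not to $S^3\times(0,1]$; and the mapping cylinder of the Hopf map $S^3\to S^2$ collapses one end to a $2$-sphere, so it is not $D^4$. Because your identification is incorrect, your description of where caps ``come from'' is also off: caps do not correspond to the part of $S$ over a small disk near the origin of the second factor that you reach as $t\to0$. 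In the correct correspondence they arise from the local \emph{maxima} of the Morse function $\mathrm{pr}_2|_W$ on a braided cobordism, which may occur at interior values of $t$.

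The correct bridge, set up in Section~\ref{sec:BraidedCobordismsFromBraidedSurfaces}, is the collar identification $\kappa'\colon\nu\to S^3\times[0,1]$, where $\nu$ is a collar neighborhood of the \emph{entire} boundary $\partial(D^2\times D^2)$ (obtained by scaling $\partial(D^2\times D^2)$ by $\tfrac{1}{2}(t+1)$), built so that $\kappa$ carries $\partial_1=D^2\times S^1$ to $T_1$, carries $\partial_2=S^1\times D^2$ to $T_2$, and respects the fibrations by $\mathrm{pr}_2$ and $\rho$. Since $S$ is compact and properly embedded, a preliminary isotopy pushes it entirely into $\nu$, producing $W=\kappa'(S)\subset S^3\times[0,1]$ with $\partial W\subset S^3\times\{1\}$ and $W\cap(S^3\times\{0\})=\emptyset$. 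This immediately dissolves your worry about the central fiber: there is no degeneration to control, because $S$ never approaches the center of $D^2\times D^2$. It also corrects your claim about $\partial W$: the boundary of $W$ sits \emph{entirely} in $S^3\times\{1\}$ (it does not split across $S^3\times\{0\}$ and $S^3\times\{1\}$, and the cap boundaries are not boundary components of $W$ at all). Applying Theorem~\ref{thm:AlexanderMainTheorem} (rel $\partial W$ when $\partial S$ is already a closed braid) and then Lemma~\ref{lem:braidedcobordismtobraidedsurfacewithcaps} — which shows that the local maxima of a braided cobordism become caps under $(\kappa')^{-1}$ after smoothing corners, while the saddles become simple branch points — finishes the argument. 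The missing ingredient in your write-up is precisely this lemma and the collar picture it depends on; without them, the final translation step you gesture at does not go through.
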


These generalized surface braiding results make it possible to extend applications which rely on Rudolph's braiding algorithm.  Here we outline one such application, which involves extending Loi and Piergallini's techniques to construct broken Lefschetz fibrations on oriented smooth 4-manifolds.  Let $X$ be a smooth, oriented, compact 4-manifold, and $\Sigma$ a compact oriented surface.  Then a surjective map $f:X \rightarrow \Sigma$ is called a \emph{Lefschetz fibration} if around every critical point the map $f$ can be modeled in orientation-preserving complex coordinates locally as $f(u,v)=u^2+v^2$.  It is called a \emph{broken Lefschetz fibration}, if along with these isolated critical points, it also contains embedded circles of critical points near which $f$ is locally modeled by  $f(\theta,x,y,z) = (\theta, x^2+y^2-z^2)$.  

Lefschetz fibrations are closely related to symplectic structures on $X$ \cite{Donaldson,GompfandStipsicz}, and allow us to express the 4-manifold $X$ combinatorially in terms of the monodromy of a regular fiber (cf.\ \cite{GompfandStipsicz}).  Broken Lefschetz fibrations exist more generally, but share a similar relation to near-symplectic structures \cite{AurouxDonaldsonKatzarkov}, and can be used to define invariants of smooth 4-manifolds and finitely presented groups \cite{Baykur2012}.  They were introduced by Auroux, Donaldson, and Katzarkov in \cite{AurouxDonaldsonKatzarkov}, where they constructed a broken Lefschetz fibration on $S^4$.  Later, it was shown independently by Akbulut and Karakurt \cite{AkbulutandKarakurt}, Baykur \cite{Baykur}, and Lekili \cite{Lekili}  that any oriented smooth 4-manifold admits a broken Lefschetz fibration over $S^2$.  Although their approaches differ, none of them build the desired fibration directly from a given handle decomposition of $X$, instead relying on the modification of critical points of generic maps, or deep classification results from contact topology.

Corollary~\ref{cor:braidedsurfaceswithcaps} allows us to extend Loi and Piergallini's techniques to construct broken Lefschetz fibrations from handle decompositions on a wide class of 4-manifolds.  Indeed, given a handle decomposition of $X$ with $\partial X \neq \emptyset$, we can construct a branched covering $h:X \rightarrow D^2 \times D^2$ one handle at a time, so that the branch locus is a surface with only cusp and node singularities.  In many cases this branch locus can be made to be orientable, and hence by Corollary~\ref{cor:braidedsurfaceswithcaps} can be isotoped to a braided surface with caps in $D^2 \times D^2$.  The desired fibration on $X$ is then obtained as the composition $\mathrm{pr}_2\circ h:X \rightarrow D^2$.  This construction yields fibrations directly from the handle decomposition of $X$, and can be combined with techniques in \cite{GayandKirby} to give broken Lefschetz fibrations on closed 4-manifolds. 

Another avenue of application lies in using algebraic information from a braid to answer geometric questions about its closure.  Indeed, Rudolph used braided ribbon surfaces to study quasipositive links \cite{Rudolph1985,Rudolph1993,Rudolph2005} (links which bound braided ribbon surfaces with only positive branch points), as well as to find bounds on the ribbon genus of a link in terms of algebraic information from the braid group \cite{Rudolph1983}.  Using braided (non-ribbon) surfaces with caps, this latter approach can be extended further to look for bounds on the genus of an arbitrary surface bounded by a link, in terms of algebraic information from its boundary.  Furthermore, there are a number of link invariants whose definitions require they be computed on closed braid diagrams (e.g.\ \cite{KRII}).  By examining links that are joined by a given braided cobordism $W$, one could attempt to extend these invariants across $W$, and uncover interesting relationships between the invariants along $\partial W$ and the surface $W$.  The author intends to pursue these questions further in upcoming work.

The remainder of this paper will be organized as follows.  In Section~\ref{sec:SurfaceBraidings} we define various notions of surface braidings in $D^2 \times D^2$ and $S^3\times [0,1]$, as well as outline the relationship between them.  In Section~\ref{sec:BraidingLinkCobordisms} we present diagrammatic methods for studying 1-dimensional braids and surfaces in 4-space, and use them to prove Theorem~\ref{thm:AlexanderMainTheorem} and Corollary~\ref{cor:braidedsurfaceswithcaps}.  

\section{Braided surfaces in 4-space}
\label{sec:SurfaceBraidings}

\subsection{Links as braid closures}
Let $D^2 \subset \mathbb{C}$ be the closed unit disk, $S^1 = \partial D^2$, and $S^3 = \{(z,w) : |z|^2+|w|^2 = 1\}\subset \mathbb{C}^2$ the unit 3-sphere.  We set $T_1 = S^3 \cap \{|z|\leq \frac{1}{\sqrt{2}}\}$ and $T_2 = S^3 \cap \{|w|\leq \frac{1}{\sqrt{2}}\}$, which are both tori, and let $U = S^3 \cap \{w=0\}$ (i.e., the core of $T_2$).  We say that an oriented link $L$ in $S^3$ is a \emph{closed braid} if $L \subset S^3 \backslash U$, and $\mathrm{arg}(w)$ is strictly increasing as we traverse the components of $L$ in the positively oriented direction.  We call $U$ the \emph{axis} of the closed braid.  

Alexander's theorem then says that any oriented link in $S^3$ is isotopic to a closed braid.  Markov's theorem says that any two closed braids which are isotopic as links can be joined by a sequence of isotopies through closed braids, as well as stabilization and destabilizations moves which increase and decrease the braid index respectively.

\subsection{Movie presentations of braided cobordisms}
\label{sec:MoviePresentations}
Recall from Section~\ref{sec:Introduction} that a braided cobordism is a surface $W \subset S^3 \times [0,1]$ smoothly and properly embedded, on which the projection $\mathrm{pr}_2:S^3 \times [0,1] \rightarrow [0,1]$ restricts as a Morse function, with each regular level set $W_t=W \cap (S^3 \times \{t\})$ a closed braid in $S^3 \times \{t\}$.  We will assume in what follows that $\mathrm{pr}_2|_W$ is injective on its set of critical points.  Each regular $W_t$ with $t<1$ is oriented as the boundary of $W\cap(S^3 \times\left[t,1\right])$.  

We now establish a diagrammatic method for describing braided cobordisms.  Choose a point $p \in U \subset S^3$ with $\{p\} \times [0,1]$ disjoint from $W$, and identify the complement of $p$ in $(S^3,U)$ with $(\mathbb{R}^3,z\mathrm{-axis})$.  Choose the identification so that $\mathrm{arg}(w)$ corresponds with the angular cylindrical coordinate on $\mathbb{R}^3$.  Here we let $(x,y,z)$ denote the usual coordinates on $\mathbb{R}^3$, while $t$ denotes the coordinate on $[0,1]$. 

Let $\pi : \mathbb{R}^3 \rightarrow \mathbb{R}^2$ denote the orthogonal projection to the $xy$-plane.  After perturbing $W$ slightly if necessary, we can assume that $\pi \times \text{id}:\mathbb{R}^3 \times \left[0,1\right] \rightarrow \mathbb{R}^2 \times \left[0,1\right]$ restricts to a family of regular link projections $W_t\rightarrow \R^2 \times [0,1]$ for all but finitely many $t \in \left[0,1\right]$.  After decorating with over and under crossing information, we obtain a continuous family of link diagrams with finitely many singular diagrams.  As each regular $W_t$ is a closed braid, each regular diagram will be the diagram of a closed braid, while passing a singular still will change the diagram by either:
\begin{enumerate}
\item addition or deletion of a single loop around $0 \in \mathbb{R}^2$ disjoint from the rest of the diagram (corresponding to local maximum and minimum points of $W$),
\item addition or deletion of a single crossing between adjacent strands in the braid diagram by a band surgery (corresponding to saddle points of $W$),
\item a single braid-like Reidemeister move of type II or III, where each strand involved in the move is oriented in the positive direction. 
\end{enumerate}

We refer to this family of link diagrams as the \emph{movie presentation} of $W$.  Note that because we are not assuming $W$ is in general position with respect to the $z$ and $t$-projections, our definition of movie presentation differs slightly from that used by other authors (see e.g. \cite{surfacesinfourspace}).  Note that during the proof of Theorem~\ref{thm:AlexanderMainTheorem} we will also consider movie presentations using projections other than the orthogonal projection $\pi:\R^3 \rightarrow \R^2$ to the $xy$-plane.

The surface $W$ can then be described by taking a finite number of the nonsingular stills, where each one differs from the previous still by a single modification as described above, or by a planar isotopy preserving the closed braid structure.  Some caution is needed in using such descriptions, as different choices of planar isotopies linking two adjacent diagrams can result in non-isotopic embeddings (see e.g., \cite{jacobsson}).   See Figure \ref{fig:braidcobord} for a genus 1 example of a braided movie presentation between the trefoil and the empty knot (the stills are read as lines of text, from left to right).

\begin{figure}
 \centering
 \includegraphics[width=\textwidth]{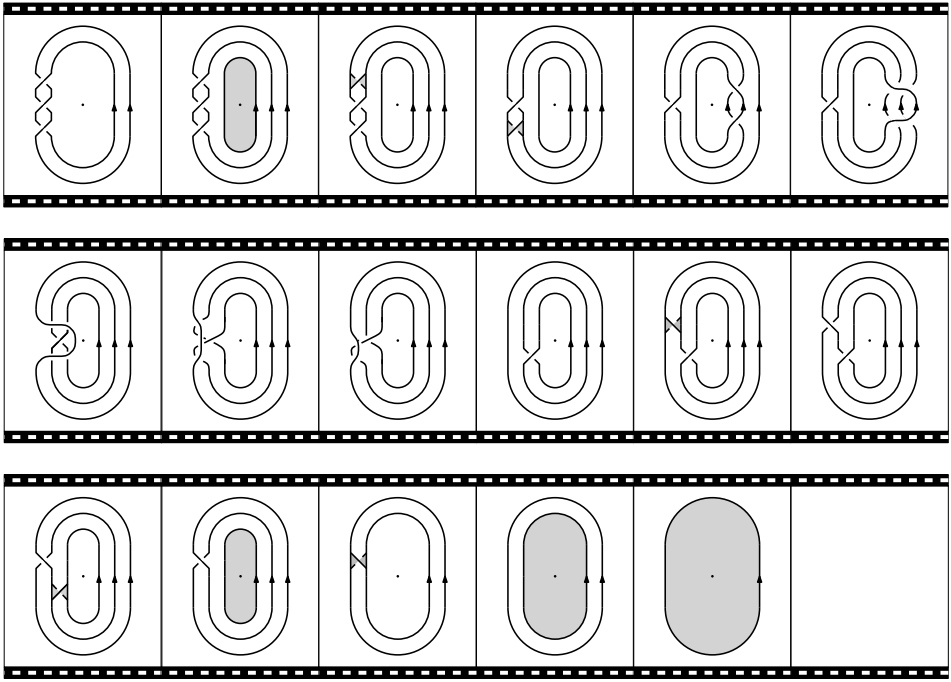}
  \caption{Braided movie presentation}
  \label{fig:braidcobord}
\end{figure}

\subsection{Braided surfaces in $D^2 \times D^2$} 
Rudolph defined a \emph{braided surface} \cite{Rudolph1983} to be a smooth properly embedded oriented surface $S \subset D^2 \times D^2$ on which the projection to the second factor $\mathrm{pr}_2:D^2 \times D^2 \rightarrow D^2$ restricts as a simple branched covering
.  Examples of these braided surfaces can be obtained by taking intersections of non-singular complex plane curves with 4-balls in $\mathbb{C}^2$, and they can be used to study the links that arise as their boundaries in $S^3 = \partial D^4$ (see e.g. \cite{Rudolph1985,Rudolph1993,Rudolph2005}).  

Let $S$ be a braided surface.  In a neighborhood of any branch point $p$ of the covering $\mathrm{pr}_2|_S$, there are local complex coordinates $u$ and $v$ on $D^2$ such that $S$ is given by the equation $u^2=v$, in the coordinates $(u,v)$ on $D^2 \times D^2$.  

The boundary of $D^2 \times D^2$ decomposes as $\partial (D^2 \times D^2) = (D^2 \times S^1) \cup (S^1 \times D^2)$ in the obvious way, and we set $\partial_1 = D^2 \times S^1$ and $\partial_2 = S^1 \times D^2$.  We then define closed braids in $\partial (D^2 \times D^2)$ as links in $\partial_1$ on which the projection $\mathrm{pr}_2 : \partial_1 \rightarrow S^1$ restricts to a covering map.  Notice then that the boundary of a braided surface is a closed braid in $\partial (D^2 \times D^2)$.

One feature of Rudolph's braided surfaces are that they are all necessarily \emph{ribbon}.  A properly embedded surface $S$ in $D^4 = \{(z,w) : |z|^2+|w|^2 \leq 1\}$ is said to be \emph{ribbon embedded} if the function $|z|^2+|w|^2$ restricts to $S$ as a Morse function with no local maximal points on $\text{int }S$.  A properly embedded surface in $D^4$ is said to be \emph{ribbon} if it is isotopic to a surface which is ribbon embedded.  By fixing an identification of $D^2 \times D^2$ with $D^4$, we can similarly consider ribbon surfaces in $D^2 \times D^2$ (the definition of ribbon embeddings in $D^2 \times D^2$ will depend on our choice of identification, though the resulting class of ribbon surfaces will not).  

Rudolph proved that any orientable ribbon surface in $D^2 \times D^2$ is isotopic to a braided surface, though in general this isotopy cannot be chosen to fix $\partial S$, even if $\partial S$ is already a closed braid.

Viro defined a similar notion which he called a \emph{2-braid}, by additionally requiring that $\partial S \subset \partial_1 = D^2 \times S^1$ be a trivial closed braid (i.e., $\partial S = P \times S^1$ for some finite subset $P \subset D^2$).  $2$-braids come equipped with a closure operation yielding closed surfaces in $S^4$, and Viro \cite{virolecture} proved a 4-dimensional Alexander theorem by showing that every closed oriented surface in $S^4$ is isotopic to the closure of a $2$-braid.  These 2-braids were also studied extensively by Kamada \cite{kamada1993,alexandermarkovdim4,kamada1996,kamada1999,kamada}, who proved a 4-dimensional Markov theorem relating any two 2-braids with isotopic closures.

\subsection{Braided surfaces with caps}
\label{sec:BraidedSurfacesWithCaps}
The embedded surfaces in $D^2 \times D^2$ we consider in this paper will not in general be ribbon, and hence cannot be braided via Rudolph's algorithm.  We thus consider a less restrictive notion of braiding, which we define now.

Let $\phi:F \rightarrow \Sigma$ be a smooth map of oriented surfaces.  Then a \emph{cap of $F$ with respect to} $\phi$ is an  embedded disk $D \subset F$, so that 
\begin{enumerate}
\item $\phi$ restricts to embeddings on $\text{int }D$ and on $\partial D$,
\item $F$ and $\Sigma$ both admit coordinate charts of the form $S^1 \times [-1,1]$ around $\partial D = S^1 \times \{0\}$ and $\phi(\partial D) = S^1 \times \{0\}$, on which $\phi$ is given by $(\theta,t) \mapsto (\theta,t^2)$,
\item in the above coordinate chart around $\phi(\partial D)$, the curve $S^1 \times \{1\}$ lies in $\phi(\text{int }D)$.  
\end{enumerate}

Now let $S \subset D^2 \times D^2$, and let $\mathrm{pr}_S$ denote the restriction of $\mathrm{pr}_2$ to $S$.  We say that $S$ is a \emph{braided surface with caps} if the critical points of $\mathrm{pr}_S$ all correspond either to isolated simple branch points or boundaries of caps of $S$ with respect to $\mathrm{pr}_S$.  Moreover, we will often assume that the critical values in $D^2$ form a set of embedded concentric circles (corresponding to the boundaries of caps), with isolated critical values  lying inside the innermost circle.  See Figure \ref{fig:braidsurfacewithcaps} for a cross sectional diagram of a braided surface with a single cap.  

\begin{figure}
 \centering
 \includegraphics[width=7cm]{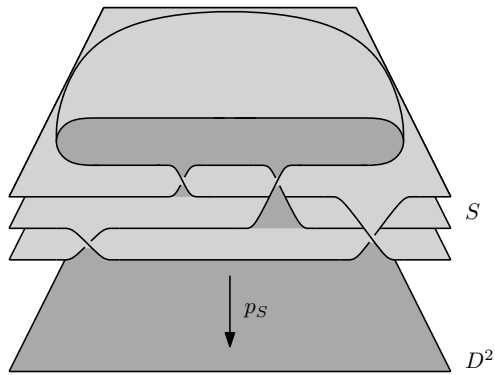}
  \caption{Cross section of a braided surface with caps}
  \label{fig:braidsurfacewithcaps}
\end{figure}

\subsection{Braided surfaces with caps from braided cobordisms}
\label{sec:BraidedCobordismsFromBraidedSurfaces}

Braided cobordisms  are closely related to braided surfaces with caps, a fact which we illuminate here.  We begin by defining a smooth map $\rho : S^3 \rightarrow D^2$ as follows.  Let $\lambda : \left[0,1\right] \rightarrow \left[0,1\right]$ be a smooth function with $\lambda(t) = t$ on $\left[0,\frac{1}{4}\right]$, $\lambda \equiv \frac{1}{\sqrt{2}}$ on $[\frac{1}{\sqrt{2}},1]$, and so that $\frac{d\lambda}{dt} > 0$ on $[0,\frac{1}{\sqrt{2}})$.  Then we define $\rho : S^3 \rightarrow D^2$ as 
\[
\rho(z,w) = \frac{\sqrt{2}w \lambda (|w|)}{|w|}
\]
for $w \neq 0$, and $\rho(z,0) = 0$.  Clearly $\rho$ is smooth, with $T_1=\rho^{-1}(\partial D^2)$ and $T_2=\overline{\rho^{-1}(\text{int }D^2)}$.  Furthermore, using $\rho$ we can fix a fibering of $T_1$ over $S^1$ with fiber $D^2$, and a fibering of $T_2$ over $D^2$ with fiber $S^1$.  A link $L \subset T_1$ is a closed braid if and only if $\rho|_L : L \rightarrow S^1$ is a covering map.  We call the degree of the covering map $\rho|_L$ the \emph{index} of the closed braid $L$. 


We now identify $\partial (D^2 \times D^2)$ with $S^3$ by a smooth homeomorphism $\kappa: \partial (D^2 \times D^2) \rightarrow S^3$, which smooths the corners of $\partial (D^2 \times D^2)$, and identifies $\partial_1$ with $T_1$ and $\partial_2$ with $T_2$.  Furthermore, we assume that $\kappa$ is a diffeomorphism away from the corners of $\partial (D^2 \times D^2)$, and maps the fibers of $\mathrm{pr}_2$ diffeomorphically onto the fibers of $\rho$.

For $0 \leq t \leq 1$, we can multiply $\partial (D^2 \times D^2) \subset \mathbb{C}^2$ by a factor of $\frac{1}{2}(t+1)$ and use $\kappa$ to identify the resulting set with $S^3 \times \{t\}$.  We thus obtain an identification of $S^3 \times [0,1]$ with a collar neighborhood $\nu$ of $\partial (D^2 \times D^2)$ in $D^2 \times D^2$, which we denote by $\kappa': \nu \rightarrow S^3 \times [0,1]$. 

As any properly embedded surface $S$ in $D^2 \times D^2$ can easily be arranged to lie in the collar neighborhood $\nu$, we see that after smoothing corners any such surface gives rise to a smooth properly embedded surface in $S^3 \times [0,1]$ whose boundary lies in $S^3 \times \{1\}$, and vice versa.

\begin{lemma}
\label{lem:braidedcobordismtobraidedsurfacewithcaps}
Suppose that $W\subset S^3 \times [0,1]$ is a braided cobordism, with $W \cap (S^3 \times \{0\}) = \emptyset$.  Then $(\kappa')^{-1} (W)$ will be a braided surface with caps in $D^2 \times D^2$ (after a small isotopy smoothing corners around the boundaries of the caps).
\end{lemma}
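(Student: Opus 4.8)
The plan is to analyze the map $\mathrm{pr}_2 \circ \kappa' : W \to [0,1]$ versus $\mathrm{pr}_S = \mathrm{pr}_2|_{(\kappa')^{-1}(W)} : (\kappa')^{-1}(W) \to D^2$, and to show that the critical points of the latter are exactly of the two allowed types. Set $S = (\kappa')^{-1}(W)$. Since $\kappa'$ identifies $S^3 \times [0,1]$ with the collar $\nu$ and maps fibers of $\mathrm{pr}_2 : D^2 \times D^2 \to D^2$ into fibers of $\rho$, and since $\rho|_L$ is a covering over $\partial D^2$ precisely when $L$ is a closed braid, the restriction of $\mathrm{pr}_S$ over a neighborhood of $\partial D^2$ is (away from the corners) conjugate, level-by-level in the radial coordinate, to the family of closed braids $W_t$ under the identification $t \leftrightarrow$ radius. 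First I would make this precise: pick polar coordinates $(r,\psi)$ on the target $D^2$ so that $r$ plays the role of $t$ (via the collar) and $\psi$ plays the role of $\mathrm{arg}(w)$, i.e.\ of the $S^1$-direction along which the braids wind. Away from $r = 0$ one has $\mathrm{pr}_S = $ (radial function of the collar parameter) $\times$ ($\rho$ composed with the braid projection), so a point of $S$ is critical for $\mathrm{pr}_S$ iff it is a critical point of $\mathrm{pr}_2|_W$, i.e.\ a Morse critical point of the height function $t$ on $W$. Thus the critical locus of $\mathrm{pr}_S$ away from the core consists of the finitely many local maxima, local minima, and saddles of $W$, which by the hypothesis that $W$ is a braided cobordism are isolated, and these become the boundary circles of the caps after corner-smoothing; near $r=0$ (the image of the braid axis $U$, i.e.\ the core of $T_2$) the surface $S$ is disjoint from the core because $W$ is a braided cobordism, so $\mathrm{pr}_S$ has no critical values there to worry about initially — but this is exactly the place where the caps will be capped off.

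The key geometric step is to see that each local maximum of $t$ on $W$ gives rise to a \emph{cap} of $S$ with respect to $\mathrm{pr}_S$ in the precise sense of Section~\ref{sec:BraidedSurfacesWithCaps}. Near a local maximum $q$ of $\mathrm{pr}_2|_W$, the level sets $W_t$ for $t$ slightly below the critical value consist of a small unknotted circle that, by the definition of braided cobordism (item (1) in the movie description), is a single trivial loop around $0 \in \mathbb{R}^2 = $ (image of the braid projection) disjoint from the rest; this loop bounds an embedded disk $D$ in $W$ running down to $q$. I would check that under $\mathrm{pr}_S$ this disk $D$ maps with $\mathrm{int}\,D$ and $\partial D$ embedded — because the loop winds once around the braid axis at each level, its image under $\rho$ is an embedded circle, and the disk sweeps out an embedded annular region in $D^2$ as $r$ decreases toward the critical radius — and that in suitable $S^1 \times [-1,1]$ coordinates on $F = S$ and on $\Sigma = D^2$ around $\partial D$ the map $\mathrm{pr}_S$ has the local form $(\theta, s) \mapsto (\theta, s^2)$, which is the standard folded-map model of a Morse maximum of the composite-with-radial-coordinate; the orientation/side condition (3) in the definition of a cap, that $S^1 \times \{1\}$ lies in $\mathrm{pr}_S(\mathrm{int}\,D)$, holds because the cap disk lies on the small-$r$ side, i.e.\ the interior of the cap covers the inner annulus. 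This is where the small corner-smoothing isotopy is used: the collar identification $\kappa'$ has corners along $\kappa(\text{corner of }\partial(D^2\times D^2))$, and near the boundary circle of a cap one smooths these so that the fold model $(\theta,s)\mapsto(\theta,s^2)$ is genuinely smooth; since the isotopy is supported in a small neighborhood of these circles and is disjoint from the axis and from the other critical points, it does not disturb anything else. Local minima of $t$ on $W$ are ruled out by the hypothesis $W \cap (S^3 \times \{0\}) = \emptyset$ together with the fact that $W$ is properly embedded with $\partial W \subset S^3 \times \{1\}$ — wait, more carefully: a local minimum of $t$ on the interior of $W$ is still possible a priori, but such a minimum would create a cap on the "outer" side pointing the wrong way; I would observe that the definition of braided surface with caps as stated allows the critical values to be concentric circles corresponding to caps with the isolated (branch) values innermost, so interior minima and maxima both just contribute cap-circles, and since by hypothesis there is no boundary at $t=0$ every such cap is an honest embedded disk in $S$. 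Saddle points of $t$ on $W$ — which appear in the movie as band moves changing crossings (item (2)) — need separate handling: these are the isolated critical points of $\mathrm{pr}_S$ of index $1$; I would either arrange (as permitted in the definition) for them to sit at isolated critical values inside the innermost cap circle, or, if one wants genuinely simple branch points of the form $u^2 = v$, perturb the saddle configuration so that it becomes a simple branch point — but on reflection the cleanest route is just to note that the definition of "braided surface with caps" in Section~\ref{sec:BraidedSurfacesWithCaps} permits isolated critical values that are simple branch points, and that a saddle of the height function on a surface projecting to the disk, after composing with the radial coordinate and after a local isotopy, is modeled by a simple branch point; this local modification is again supported near the saddle and disjoint from everything else.

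The main obstacle I expect is the bookkeeping around the braid axis and the corners: making simultaneously precise (a) that $\kappa'$ really does carry the winding coordinate of the closed braids to the angular coordinate $\psi$ on the target $D^2$ so that regular level sets of $\mathrm{pr}_S$ are the $\rho$-images of the $W_t$ and are therefore closed braids in $\partial(D^2\times D^2)$, and (b) that the corner-smoothing isotopy can be chosen to fix the axis, to leave the isolated (branch/saddle) critical points untouched, and to turn the folded level structure near each max/min into the exact local model $(\theta,s)\mapsto(\theta,s^2)$ demanded by the definition of a cap. Everything else is a routine translation: "trivial loop around $0$" in the movie $\leftrightarrow$ boundary circle of an embedded disk cap; "band move" $\leftrightarrow$ index-$1$ critical point placed among the innermost critical values; "Reidemeister II/III" $\leftrightarrow$ regular (non-critical) part of $\mathrm{pr}_S$, hence irrelevant. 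I would organize the write-up as: (i) set up coordinates and identify $\mathrm{pr}_S$ with radial-coordinate-times-$\rho\circ(\text{braid projection})$ away from the axis; (ii) classify interior critical points of $\mathrm{pr}_S$ using that $W$ is a braided cobordism; (iii) verify the cap axioms (1)–(3) at each max/min; (iv) note the corner-smoothing is local and harmless; (v) conclude.
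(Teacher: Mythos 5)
Your overall strategy (classify the critical points of $\mathrm{pr}_S$ as arising from the maxima, minima, and saddles of $\mathrm{pr}_2|_W$) matches the paper's, but you have misread the geometry at local minima, and this is a genuine gap. You assert that ``interior minima and maxima both just contribute cap-circles,'' but the paper treats them asymmetrically, and the asymmetry is essential. After flattening $W$ near each max/min so that $W\cap(T_2\times[0,1])$ is a union of disks $\{p\}\times D^2\times\{t_0\}$, consider the circle $\partial D$ where such a flat disk meets the ``cylindrical'' part of $W$ living in $T_1\times[0,1]$. At a local maximum $t_0=t_{\max}$: the flat disk maps under $(\kappa')^{-1}$ into $\tfrac12(t_{\max}+1)\cdot\partial_2$, so $\mathrm{pr}_S$ sends it onto the disk of radius $\tfrac12(t_{\max}+1)$, while the nearby cylindrical part lies at levels $t<t_{\max}$ and hence maps into circles of strictly smaller radius $\tfrac12(t+1)$. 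Both sides of $\partial D$ therefore map \emph{inward}, producing a genuine fold of the local model $(\theta,s)\mapsto(\theta,s^2)$ after corner-smoothing, and that fold circle is the boundary of a cap. At a local minimum $t_0=t_{\min}$: the flat disk still maps inward to the disk of radius $\tfrac12(t_{\min}+1)$, but now the cylindrical part lies at $t>t_{\min}$ and maps to circles of \emph{larger} radius. The two sides of $\partial D$ go to opposite sides, so after smoothing corners $\mathrm{pr}_S$ is a local diffeomorphism across $\partial D$; there is no fold and hence no cap. If you tried to verify cap condition (2) at a minimum you would find no chart in which $\mathrm{pr}_S$ looks like $(\theta,s)\mapsto(\theta,s^2)$, so the argument as you wrote it cannot be completed.

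Two related slips: the statement ``a point of $S$ is critical for $\mathrm{pr}_S$ iff it is a critical point of $\mathrm{pr}_2|_W$'' is false --- the isolated max/min of $\mathrm{pr}_2|_W$ maps to the core of $T_2$ and becomes a \emph{regular} point of $\mathrm{pr}_S$ (the center of the cap image, or of the regular disk, at $0\in D^2$), while the new critical locus of $\mathrm{pr}_S$ is the whole fold \emph{circle} $\partial D$ at a max, which is not a critical point of $\mathrm{pr}_2|_W$ at all. And the hypothesis $W\cap(S^3\times\{0\})=\emptyset$ is not there to constrain minima; it is needed so that $S=(\kappa')^{-1}(W)$ has boundary only on $\partial(D^2\times D^2)$ and is therefore properly embedded. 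Your handling of saddles is basically fine: after putting them in $T_1\times[0,1]$, the singular level braids exhibit a transverse double point whose preimage under $(\kappa')^{-1}$ is a simple branch point of $\mathrm{pr}_S$, exactly as the definition requires. But I would urge you to make the flattening of $W\cap(T_2\times[0,1])$ into flat disks an explicit step, since it is what lets you read off the fold versus no-fold dichotomy cleanly at the boundaries of those disks.
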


\begin{proof}
Let $S = (\kappa')^{-1}(W)$, and let $\mathrm{pr}_S$ denote the restriction of $\mathrm{pr}_2$ to $S$.  Each local maximum or minimum point of $W \subset S^3 \times [0,1]$  with respect to the height function will lie in $T_2 \times [0,1]$, and we can arrange that each saddle point of $W$ lies in $T_1\times [0,1]$.  Furthermore, by flattening a neighborhood of each local maximum and minimum point, we can isotope $W$ so that it intersects $T_2 \times [0,1] = S^1 \times D^2 \times [0,1]$ in a collection of disks of the form $\{p\}\times D^2 \times \{t\}$.  The image of any such disk under $(\kappa')^{-1}$ will be a disk in $\frac{1}{2}(t+1) \cdot \partial_2$, and the restriction of $\mathrm{pr}_S$ to its interior will be free of critical points.

Now $W'_t=W \cap (T_1 \times \{t\})$ will be a (possibly singular) closed braid in $T_1 \times \{t\}$ for each $0\leq t \leq 1$.  Each singular braid $W'_t$ will consist of a closed braid with a pair of strands intersecting at a point, with distinct tangent lines.  These self-intersections corresponds to saddle points of the surface $W$.  Each $(\kappa')^{-1}(W'_t)$ will thus also be a possibly singular closed braid in $\frac{1}{2}(t+1)\cdot\partial_1$, where each singular point gives rise to a simple branch point of the projection $\mathrm{pr}_S$.  The non-singular points of these closed braids all correspond to regular points of $\mathrm{pr}_S$.

Finally, it remains to consider what happens along the boundaries of the disks in $W \cap (T_2 \times [0,1])$.  For any disk $D$ corresponding to a local minimum of $W$, the boundary of $(\kappa')^{-1}(D)$ can be smoothed in such a way that the resulting points are all regular points of the map $\mathrm{pr}_S$.  If $D$ instead corresponds to a local maximum, then the boundary of $(\kappa')^{-1}(D)$ is instead smoothed in such a way that $(\kappa')^{-1}(D)$ becomes a cap of $S$ with respect to $\mathrm{pr}_S$.  Since all critical points of $\mathrm{pr}_S$ are either isolated simple branch points, or lie along the boundary or a cap, $S \subset D^2 \times D^2$ is a braided surface with caps.
\end{proof}

\section{Braiding link cobordisms}
\label{sec:BraidingLinkCobordisms}

We begin with the proof of Theorem \ref{thm:AlexanderMainTheorem}.  For the duration of the proof, it will be convenient to think of our cobordisms as lying in $\R^3 \times [0,1]$ so that we can use the diagrammatic approach described in Section~\ref{sec:MoviePresentations}.  Suppose that $W \subset \mathbb{R}^3 \times \left[ 0, 1 \right]$ is a properly embedded oriented link cobordism between closed braids $B_0 \subset \mathbb{R}^3 \times \{ 0 \}$ and $B_1 \subset \mathbb{R}^3 \times \{ 1 \}$.  Assume furthermore that the restriction of the projection $\mathrm{pr}_2 : \R^3 \times [0,1] \rightarrow [0,1]$ to $W$ is a Morse function.  For any such surface $W \subset \mathbb{R}^3 \times [0,1]$ and any $[a,b]\subset [0,1]$, let $W_{[a,b]} = W \cap (\mathbb{R}^3 \times [a,b])$, and $W_t = W \cap (\R^3 \times \{t\})$.

\subsection{Braiding around critical points}
\label{sec:BraidingAroundCriticalPoints}

We begin by proving that $W$ can be ``braided" in a neighborhood of the critical points of $\mathrm{pr}_2|_W$.  This will reduce the problem of proving Theorem~\ref{thm:AlexanderMainTheorem} to proving it for cobordisms $W$ without critical points.

\begin{lemma}
There is an isotopy of $W$ rel $\partial W$, taking $W$ to a surface $W'$ such that $W'_{[a,b]}$ is a braided cobordism for $[a,b] \in \{[0,\frac{1}{6}], [\frac{1}{3},\frac{2}{3}],[\frac{5}{6},1]\}$, and is free of critical points for $[a,b] \in \{[\frac{1}{6},\frac{1}{3}],[\frac{2}{3},\frac{5}{6}]\}$.
\end{lemma}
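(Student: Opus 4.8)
The plan is to first normalize the Morse function $\mathrm{pr}_2|_W$ so that all of its critical values lie in the interior interval $(1/3,2/3)$, then to straighten $W$ into a literal product near the two ends, and finally to braid $W$ in a neighborhood of each critical point. Since $\mathrm{pr}_2|_W$ is Morse on a compact surface with no critical points on $\partial W=B_0\sqcup B_1$, it has finitely many critical values $t_1<\dots<t_k$ in $(0,1)$, which by assumption are distinct. Choosing an orientation-preserving diffeomorphism $\mu$ of $[0,1]$ fixing the endpoints and carrying $\{t_1,\dots,t_k\}$ into $(5/12,7/12)$, the map $\mathrm{id}_{\R^3}\times\mu$ is an ambient isotopy of $\R^3\times[0,1]$ fixing $\R^3\times\{0,1\}$ pointwise; applying it to $W$ (and perturbing so that each critical point again has a standard Morse model in a small product neighborhood) is an isotopy rel $\partial W$ after which all critical values lie in $(5/12,7/12)\subset(1/3,2/3)$.

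Next I would collar the two ends. After the first step $W_{[0,1/3]}$ and $W_{[2/3,1]}$ contain no critical points, so $\mathrm{pr}_2$ restricts to a proper submersion on each, and by Ehresmann's theorem they are products over $W_0=B_0$ and $W_1=B_1$ respectively, compatibly with the identity on those level sets. By the isotopy extension theorem these product structures are realized by ambient isotopies of $\R^3\times[0,1/3]$ and $\R^3\times[2/3,1]$; cutting them off near $t=1/6$ and $t=5/6$ and applying them to $W$ gives an isotopy rel $\partial W$ after which $W_{[0,1/6]}=B_0\times[0,1/6]$ and $W_{[5/6,1]}=B_1\times[5/6,1]$. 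These pieces are then braided cobordisms, trivially, since every level set is a closed braid, and $W_{[1/6,1/3]}$ and $W_{[2/3,5/6]}$ are visibly free of critical points. It remains only to isotope $W$, keeping the two collars (and hence $\partial W$) fixed, so that $W_{[1/3,2/3]}$ becomes a braided cobordism; this will be allowed to modify $W$ on the transition intervals $[1/6,1/3]$ and $[2/3,5/6]$, but only in a way that keeps them free of critical points.

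The heart of the argument is braiding $W$ about its critical points in $(1/3,2/3)$. By a level-preserving ambient isotopy supported near $t=1/3$ and $t=2/3$ (which creates no new critical points) I would first apply Alexander's theorem to arrange that $W_{1/3}$ and $W_{2/3}$ are closed braids, modifying $W_{[1/6,1/3]}$ and $W_{[2/3,5/6]}$ harmlessly. Then I would process $t_1<\dots<t_k$ from the bottom up, at each stage keeping $W$ a literal product over a closed braid on the interval between the previous critical point (or $t=1/3$) and the current one, so that those sub-cobordisms are braided, and then braiding about the current critical point: for a local maximum or minimum one isotopes $W$ near it, using also an Alexander braiding of the adjacent product region which has the room to carry the vanishing (or appearing) circle across the braid axis, so that this circle becomes a trivial once-round loop near the axis, unlinked from the ambient braid, with the critical point the standard on-axis cap; for a saddle one arranges, by Alexander's theorem applied to the attaching band regarded as a thickened arc with fixed closed-braid endpoints, that the band joins two adjacent strands, so that the saddle is an allowed band surgery. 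Any auxiliary critical points introduced in the process (realizing Markov-type moves) are placed within $(1/3,2/3)$ near the $t_i$. The result is that $W_{[1/3,2/3]}$ is a braided cobordism, and since every isotopy here is supported in $\R^3\times(1/6,5/6)$ the whole construction is rel $\partial W$.

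I expect the last step to be the main obstacle. Braiding about a maximum or minimum is subtle because the vanishing circle of $\mathrm{pr}_2|_W$ is a small null-homotopic unknot, whereas in a braided cobordism such a circle must wind once around the axis so that it can shrink to an on-axis critical point; turning one into the other is not possible by an isotopy within a single level set and genuinely uses the cobordism direction, via auxiliary births and saddles. Braiding about a saddle likewise requires controlling the attaching band relative to a possibly complicated ambient closed braid. One must also verify that these local modifications fit together coherently — that the closed braid appearing just above one critical point is a legitimate input for braiding the next — which is what the interposed product cobordisms over closed braids ensure, and that nothing here disturbs the collars $W_{[0,1/6]}$ and $W_{[5/6,1]}$, which holds because every interior isotopy is supported away from $t=0$ and $t=1$.
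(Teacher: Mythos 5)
Your proposal takes a genuinely different route from the paper, and the difference matters.  The paper's proof (which it attributes to Kamada) uses the standard Morse-theoretic fact that the critical points of $\mathrm{pr}_2|_W$ can be reordered \emph{by index}: all index-$0$ points (minima) are pushed down into $\R^3\times[0,\frac{1}{6}]$, all index-$2$ points (maxima) up into $\R^3\times[\frac{5}{6},1]$, and all saddles to $t\approx\frac{1}{2}$.  Because $\partial W$ consists of closed braids, the level sets $W_t$ are already closed braids for $t$ near $0$ and $1$; so once the extrema live there, the birth and death circles can be positioned as trivial once-round loops about the axis with essentially no work.  The only genuinely nontrivial step is then the saddles at $t\approx\frac{1}{2}$: the attaching bands are given half-twists so that both ends of each band are positively oriented, after which the remaining strands of $W_{1/2}$ are braided by the ordinary Alexander argument while the bands are held fixed, and the braid structure is extended as a product over the rest of $[\frac{1}{3},\frac{2}{3}]$.

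You instead push \emph{all} critical points, of every index, into the middle interval, straighten the ends to literal products, and then try to braid each critical point in place, bottom up.  That is where the real gap in your argument is: you correctly identify that turning the small null-homotopic vanishing circle of an interior maximum or minimum into a once-round loop about the axis is ``the main obstacle,'' and you propose to do it via auxiliary births and saddles, without verifying that this can in fact be carried out coherently while keeping every regular level set a closed braid.  This is precisely the difficulty that the paper's reordering-by-index trick is designed to eliminate, because near the boundary $W_t$ is a closed braid automatically and the newly born or dying loop can simply be put next to the axis from the start.  Your treatment of the saddles (Alexander on the attaching band) is in the same spirit as the paper's, but for the extrema you should adopt the reordering by index; it is a standard result for cobordisms, it is what Kamada does, and it replaces the hardest and least-developed step of your plan with something essentially free.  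Two smaller points: the half-twist in each band needed to make the band ends positively oriented (so that the saddle is a legal braided band surgery) is easy to overlook and worth stating explicitly; and when you straighten the ends via Ehresmann plus isotopy extension, be careful to phrase the isotopy so that it is supported in $\R^3\times(0,1)$ and genuinely fixes $\R^3\times\{0,1\}$ pointwise --- the cleanest way is to trivialize $W_{[0,1/6]}$ onto $B_0\times[0,\frac{1}{6}]$ directly and absorb the adjustment into the critical-point-free transition strip $[\frac{1}{6},\frac{1}{3}]$.
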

 
\begin{proof}
As both $B_0$ and $B_1$ are closed braids, $W_t$ will also be a closed braid for $t$ close to $0$ and $1$, and so we can assume that $W_t$ is a closed braid for all $t \in \left[0,\frac{1}{6}\right] \cup \left[\frac{5}{6},1\right]$.  Push all minimal points into $\mathbb{R}^3 \times \left[0,\frac{1}{6}\right]$, all maximal points into $\mathbb{R}^3 \times \left[\frac{5}{6},1\right]$, and all saddle points into $\mathbb{R}^3 \times \left\{\frac{1}{2}\right\}$ (see \cite{kamada} for details).  The maximal and minimal points can easily be positioned in such a way that $W'_{[0,\frac{1}{6}]}$ and $W'_{[\frac{5}{6},1]}$ remain braided.

Now passing each saddle point changes the level set $W_t$ by surgery along a 2-dimensional 1-handle.  After a small perturbation in a neighborhood of each saddle point, we can assume that these 1-handles all lie in $\mathbb{R}^3 \times \left\{\frac{1}{2}\right\}$.  By adding a half-twist in each band, we can arrange that each segment of $W_{\frac{1}{2}+\varepsilon}$ and $W_{\frac{1}{2}-\varepsilon}$ involved in the surgeries are oriented in the positive direction (see Figure \ref{fig:saddlesarrange}, where $W_{\frac{1}{2}}$ is shown).  Keeping these bands in place, the remaining strands of $W_{\frac{1}{2}}$ can be braided using the standard proof of the classical Alexander's theorem.  Thus we can arrange $W_{\frac{1}{2}}$ so that it is a closed braid both before and after the surgeries, and can extend the closed braid structure to the rest of $W'_{[\frac{1}{3},\frac{2}{3}]}$.  
\end{proof} 
\begin{figure}
 \centering
 \includegraphics[width=0.35\textwidth]{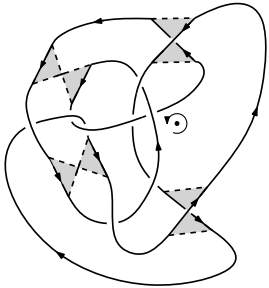}
 \caption{Arranging 1-handles}
  \label{fig:saddlesarrange}
\end{figure}

The above argument is due to Kamada \cite{kamada}.

\subsection{Braiding critical point free cobordisms}
\label{sec:threadingconstruction}

Any cobordism $W$ which is free of critical points is topologically just a union of cylinders, and is isotopic to a product cobordism.  In general, however, the isotopy taking $W$ to a product cobordism cannot be chosen to fix the boundary.  Consider, for example, the movie presentation of the critical point free cobordism $W$ depicted in Figure~\ref{fig:twistspun} (where the middle still is meant to imply that the bottom strand is given a non-zero number of full twists as we look at the level sets moving down).  Here, $W$ is isotopic to a product cobordism, but there is no such isotopy fixing $\partial W$.

\begin{figure}
 \centering
 \includegraphics[width=\textwidth]{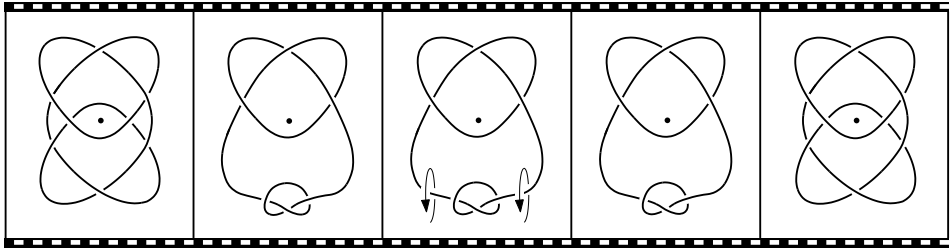}
 \caption{Critical point free cobordism not isotopic rel boundary to product cobordism}
  \label{fig:twistspun}
\end{figure}

The movie presentations of a critical point free cobordism is described entirely by its starting diagram and a sequences of Reidemeister moves and planar isotopies.  We will complete the proof of Theorem \ref{thm:AlexanderMainTheorem} in two stages, first by proving it for critical point free cobordisms whose movie presentation is described entirely by a planar isotopy (i.e., no Reidemeister moves take place between nearby stills) before proving it for the general case.  Before doing this however, we must first recall a geometric set of Markov moves for classical links used by Morton in \cite{morton}, as well as his threading construction which gives a diagrammatic approach to studying isotopies of closed braids.   The proof of Theorem~\ref{thm:AlexanderMainTheorem} relies on enhancements of the arguments used in his proof of Markov's theorem.

\subsection{Geometric Markov moves for closed braids in $\mathbb{R}^3$}
Morton's geometric formulation of Markov's theorem states that two closed braids which are isotopic as links can be joined by a sequence of braid isotopies and simple Markov equivalences.  A \emph{braid isotopy} between two closed braids $L_0$ and $L_1$ in $\mathbb{R}^3$ is an isotopy $\phi_{\alpha}$ of $\mathbb{R}^3$, i.e., a continuous family of maps $\phi_{\alpha}:\mathbb{R}^3 \rightarrow \mathbb{R}^3$ parametrized by $\alpha \in [0,1]$ with $\phi_0 = \mathrm{id}_{\mathbb{R}^3}$, such that $\phi_{\alpha}(L_0)$ is a closed braid for all $\alpha$, and $\phi_1(L_0) = L_1$.

The second move on closed braids is a geometric version of braid stabilization.  Let $B$ and $B'$ be closed braids, and suppose there is an oriented embedded disk $R \subset \mathbb{R}^3$ intersecting the $z$-axis transversely in a single point.  Suppose also that $\partial R = c \cup c'$, where $c=B \cap R$ and $c'=B' \cap R$ are connected and where the boundary orientation of $\partial R$ is winding clockwise along $c$, and counterclockwise along $c'$.  Suppose further that $B\backslash c = B' \backslash c'$.  Then $B$ and $B'$ are said to be \emph{simply Markov equivalent} (see Figure \ref{fig:simplemarkovequivalence} where the disk $R$ is shaded).

The projections of such $B$ and $B'$ to the $xy$-plane differ by a sequence of Reidemeister moves which includes precisely one move of type I creating an extra loop around the origin.

\begin{figure}
 \centering
 \includegraphics[width=0.6\textwidth]{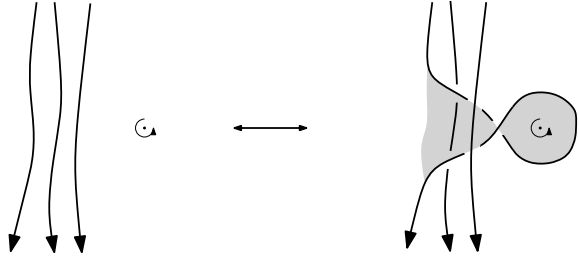}
  \caption{Simple Markov equivalence}
  \label{fig:simplemarkovequivalence}
\end{figure}

\subsection{Threading construction}

Let $P = \{xz\text{-plane}\}$ and let $\pi': \mathbb{R}^3 \rightarrow P$ be the orthogonal projection.  Let $h \subset P$ be the image of the $z$-axis under $\pi'$.  Suppose $D$ is the diagram in $P$ of an oriented link $L$.  Let $C\subset D$ denote the double points (crossings) of $L$ under the projection $\pi'$.    

A \emph{choice of overpasses} for $D$ is a pair of disjoint finite subsets $S,F \subset D\backslash C$, so that each link component contains a points from $S \cup F$, and so that points of $S$ alternate with points of $F$ when traveling along any component.  Furthermore when traveling in the positively oriented direction, each arc of the form $\left[s,f \right]$ contains no undercrossings, and each arc $\left[f,s\right]$ contain no overcrossings.

Now let $P_+ = P \cap \{x>0\}$ and $P_- = P \cap \{x<0\}$ be the right and left-hand regions of $P$ separated by $h$ respectively.  Although $h$ is not a component of the link $L$, we can enhance the diagram $D$ by assigning crossing choices whenever $D$ intersects $h$ transversely.  

Given such an enhanced diagram, $h$ is said to \emph{thread} the diagram $D$ for some choice of overpasses $\left( S,F \right)$, if $h$ intersects $D$ transversely, $S \subset P_-$, $F \subset P_+$, and 
\begin{enumerate}
\item when traveling from $P_-$ to $P_+$, $D$ crosses over $h$, 
\item when traveling from $P_+$ to $P_-$, $D$ crosses under $h$. 
\end{enumerate} 
Threadings of link diagrams allow us to study closed braids on the level of link diagrams.  The following lemma is due to Morton (see \cite{morton}): 

\begin{lemma}
\label{lem:threading}
Suppose $D$ is a diagram that is threaded by $h$ for some choice of overpasses.  Then there is a closed braid $L$ with diagram $D$.   
\end{lemma}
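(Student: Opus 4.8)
The plan is to explicitly construct a closed braid $L$ with diagram $D$ by using the threading curve $h$ as the braid axis. Recall that $h = \pi'(z\text{-axis})$ separates $P$ into the half-planes $P_+$ and $P_-$, and that the choice of overpasses $(S,F)$ decomposes each link component into arcs of the form $[s,f]$ (containing no undercrossings) and $[f,s]$ (containing no overcrossings), with $S \subset P_-$ and $F \subset P_+$. The key geometric observation is that the arcs $[f,s]$, which by hypothesis have no overcrossings and go from $P_+$ back to $P_-$ by passing under $h$, can be pushed far ``behind'' the plane $P$ (in the $-y$ direction) so that they become simple monotone arcs winding once around the $z$-axis in the negative-$y$ region, while the arcs $[s,f]$ stay near $P$ in the region $y \geq 0$ and also wind monotonically around the $z$-axis because they contain no undercrossings and cross $h$ only from $P_-$ to $P_+$ (i.e., over $h$).

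First I would set up cylindrical coordinates $(r,\theta,z)$ about the $z$-axis, with $\theta$ measured so that $P_+$ corresponds to $\theta = 0$ and $P_-$ to $\theta = \pi$. I would then isotope $L$ (without changing its diagram $D$ under $\pi'$, i.e., through an isotopy supported in the $y$-direction away from the crossings and the axis) as follows: near each point of $S$ the link sits at $\theta = \pi$, and near each point of $F$ at $\theta = 0$. Along an arc $[s,f]$, travelled in the positive direction, $D$ meets $h$ only by crossing from $P_-$ to $P_+$, i.e., always passing over $h$; since there are finitely many such crossings and they all have the same ``sense,'' I can arrange that $\theta$ runs monotonically from $\pi$ down to $0$ (through the region containing $P$, say $y \geq 0$) along this arc — any non-monotonicity of $\theta$ can be removed by a small isotopy in $y$ that does not create new double points with $h$, because every passage of $h$ along this arc is an overcrossing. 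Symmetrically, along an arc $[f,s]$ every passage of $h$ is an undercrossing, so I can push the whole arc into $y < 0$ and make $\theta$ run monotonically from $0$ to $-\pi \equiv \pi$. Concatenating, $\theta$ is a strictly monotone function along each component of $L$, which is exactly the condition that $L$ be a closed braid with axis the $z$-axis.

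The main step requiring care — and the one I expect to be the principal obstacle — is the monotonization argument: showing that the hypotheses ``$[s,f]$ has no undercrossings'' and ``$D$ crosses over $h$ going from $P_-$ to $P_+$'' really do let us make $\arg$ (the angular coordinate about the axis) monotone along that arc without introducing new self-crossings of $L$ or new intersections with $h$. The point is that a local backtrack in $\theta$ along $[s,f]$ would force the arc either to re-cross $h$ in the wrong direction (forbidden by the threading condition) or to pass under some other strand (forbidden since $[s,f]$ has no undercrossings); hence every backtrack can be undone by a finger move pushing the offending sub-arc toward smaller $|y|$ and across $h$ in the allowed direction. One must check this finger move is realizable by an ambient isotopy of $\R^3$ and that it leaves the diagram $D$ unchanged (it may change the diagram with respect to $\pi$, but that is irrelevant). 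Once monotonicity of $\theta$ is achieved on every component, $L$ is by definition a closed braid, and by construction its image under $\pi'$ is the original diagram $D$, completing the proof. I would also remark that this is precisely Morton's argument from \cite{morton}; the content here is just recalling it in the form needed later.
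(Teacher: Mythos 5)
Your overall plan — lift the threaded diagram to a closed braid by making the angular coordinate $\theta$ about the $z$-axis monotone along each component, sending $[s,f]$-arcs through $y>0$ and $[f,s]$-arcs through $y<0$ — is the same idea Morton's construction and the paper's Figure~\ref{fig:threading} are pointing at. But there is a genuine gap in the execution: you assert that along an arc $[s,f]$, ``$D$ meets $h$ only by crossing from $P_-$ to $P_+$,'' and later that ``every passage of $h$ along this arc is an overcrossing.'' Neither claim follows from the definitions. An $[s,f]$-arc starts in $P_-$ and ends in $P_+$, so it meets $h$ an odd number of times, and if it meets $h$ more than once the crossings necessarily alternate in direction; hence it will contain $P_+\to P_-$ intersections with $h$, which the threading condition forces to be \emph{under}crossings of $h$, i.e.\ to lift to $y<0$. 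The ``no undercrossings on $[s,f]$'' clause refers only to the self-crossings $C$ of $D$, not to intersections with $h$, so it does not rule this out. Consequently a lift that confines $[s,f]$ to $y\geq 0$ with $\theta$ running from $\pi$ down to exactly $0$ is geometrically impossible on such an arc, and the finger-move argument (``re-cross $h$ in the wrong direction, forbidden by the threading condition'') invokes a prohibition the threading condition does not actually impose.

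The repair is to let $\theta$ decrease strictly but \emph{without} being confined to one fundamental domain: if $[s,f]$ meets $h$ a total of $2m+1$ times, $\theta$ should run monotonically from $\pi$ down to $-2\pi m$, passing through $\theta\equiv\pi/2 \pmod{2\pi}$ exactly at the $P_-\to P_+$ intersections and through $\theta\equiv -\pi/2 \pmod{2\pi}$ at the $P_+\to P_-$ ones (and similarly with an offset on $[f,s]$-arcs). The lifted arcs then spiral around the axis, so the two families of arcs are no longer separated by the sign of $y$, and the consistency of the over/under data at self-crossings of $D$ has to be checked by a somewhat finer comparison than ``$[s,f]$ in front, $[f,s]$ behind'' — this is precisely where the overpass conditions do real work. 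The paper itself defers to Morton rather than spelling this out, so deferring is reasonable; but as written your construction would fail on any threaded diagram in which some overpass arc meets $h$ more than once.
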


The idea behind the proof of the lemma is summarized in Figure \ref{fig:threading}.  Note that even if the over/under crossing information of $D$ with $h$ has not been specified, there is a unique assignment to each such crossing so that the resulting diagram lifts to a closed braid.  Conversely, it is also easy to show that any closed braid is braid isotopic to one whose diagram is threaded by $h$ for some choice of overpasses.

\begin{figure}
 \centering
 \includegraphics[width=\textwidth]{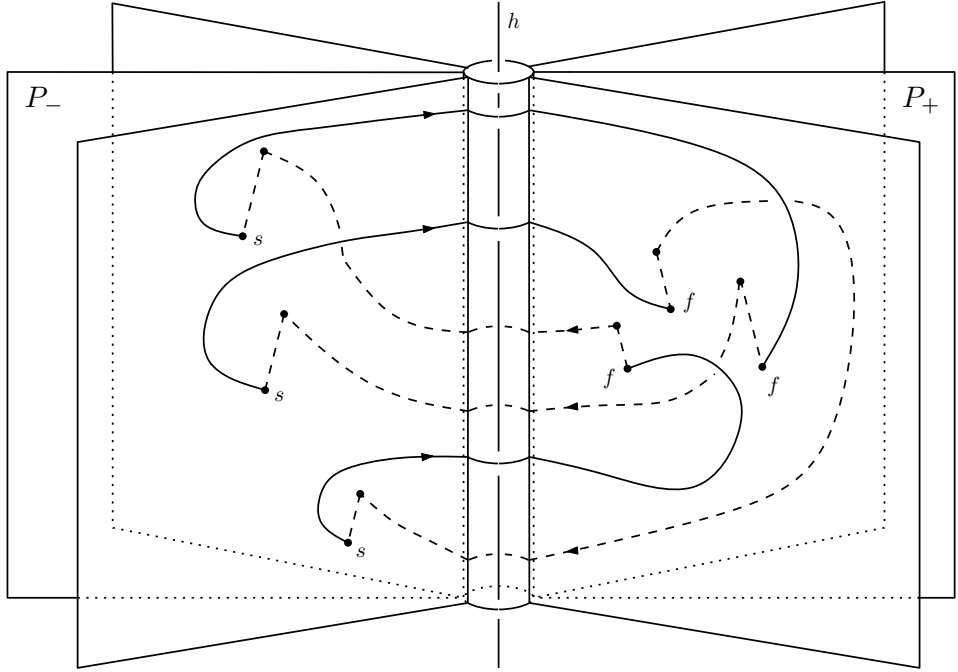}
  \caption{Trefoil as a closed braid given by a threading}
  \label{fig:threading}
\end{figure}

\subsection{Braiding movie presentations without Reidemeister moves}

Now suppose that $W \subset \mathbb{R}^3 \times \left[0,1\right]$ is a critical point free cobordism between two closed braids, and consider the movie presentation of $W$, this time projecting each $W_t \subset \mathbb{R}^3 \times \{t\} = \mathbb{R}^3$ to the plane $P$ via the projection $\pi'$.  We let $D_t$ denote the (possibly singular) diagram of $W_t$ in $P$ for each $t \in \left[0,1\right]$.  As $W$ is free of critical points, nearby diagrams will differ by either a planar isotopy or Reidemeister move.  If the movie presentation of $W$ does not involve any Reidemeister moves, then it can be described completely by specifying the initial diagram $D_0$ and a planar isotopy $\phi_{\alpha}$ of $P$, with $\phi_{\alpha} (D_0) = D_{\alpha}$ for all $\alpha$.  In what follows it will be convenient to specify the movie presentations of such surfaces in this way.      

We prove Theorem \ref{thm:AlexanderMainTheorem} first in the special case when $D_0$ and $D_1$ are threaded, and the movie presentation of $W$ does not involve any Reidemeister moves:

\begin{prop}
\label{prop:planarisotopy}
Suppose $W$ has no critical points, and that its movie presentation does not involve any Reidemeister moves.  Suppose further that $W_0$ and $W_1$ are closed braids with diagrams $D_0$ and $D_1$ threaded by $h$ for some choices of overpasses.  Then $W$ is isotopic relative its boundary to a braided cobordism.  
\end{prop}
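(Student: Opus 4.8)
The plan is to exploit the threading hypothesis to convert the problem into a statement about a one-parameter family of threaded diagrams, and then to repair the family so that it stays threaded at every time. Since $W$ has no critical points and no Reidemeister moves occur, the movie presentation is completely encoded by the initial diagram $D_0$ together with an ambient planar isotopy $\phi_\alpha$ of $P$ with $\phi_\alpha(D_0) = D_\alpha$. The diagrams $D_0$ and $D_1$ are threaded by the fixed line $h = \pi'(z\text{-axis})$ for some choices of overpasses; the trouble is that the intermediate diagrams $D_\alpha$ need not be threaded, because the curves $\phi_\alpha(D_0)$ may cross $h$ in a way that violates the alternating-overpass condition, and because the image of the braid axis has been dragged along by $\phi_\alpha$ and no longer coincides with $h$. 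I would first reinterpret the family geometrically: rather than thinking of $h$ as fixed and $D$ as moving, think of $D_0$ as fixed and the line $h_\alpha = \phi_\alpha^{-1}(h)$ as moving. Then $D_0$ is threaded by $h_0 = h$, it is also threaded by $h_1$ (since $D_1 = \phi_1(D_0)$ is threaded by $h$, pulling back by $\phi_1$ shows $D_0$ is threaded by $\phi_1^{-1}(h)$), and the question becomes: can we connect $h_0$ to $h_1$ through a family of lines (or more generally properly embedded arcs meeting $D_0$ nicely) each threading $D_0$, in a way that realizes, via Lemma~\ref{lem:threading}, an isotopy of the corresponding closed braids keeping the ends fixed?

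The key steps, in order, would be the following. First, I would set up a normal form for the planar isotopy $\phi_\alpha$: after a preliminary isotopy of $W$ supported near $\partial W$ (which is allowed since we only need the isotopy to be the identity on $\partial W$), arrange that $\phi_\alpha$ is the identity for $\alpha$ near $0$ and near $1$, and that the finitely many times at which $D_\alpha$ fails to meet $h$ transversely are isolated and generic (a strand becomes tangent to $h$, or a crossing of $D$ slides across $h$). Second, at each such time I would insert a local modification of the surface $W$ in a small ball: near a tangency of a strand with $h$, the pair $(D_\alpha, h)$ undergoes the planar analogue of a Reidemeister~II move with $h$, and I would replace the offending passage by the ``detour'' move of Figure~\ref{fig:threading}, i.e.\ push the finger of $D$ back across $h$ while simultaneously adjusting the overpass data $(S,F)$ on that component so that the alternation condition is restored; crucially, because there are no genuine Reidemeister moves among the strands of $W$, each such modification can be realized by an ambient isotopy of $W$ supported away from $\partial W$. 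Third, I would verify that after all these local repairs we obtain a family of diagrams $D_\alpha'$, threaded by $h$ for all $\alpha$, agreeing with $D_\alpha$ near $\alpha = 0, 1$; applying the lift of Lemma~\ref{lem:threading} fiberwise then produces a closed braid $W_t'$ for every $t$, and these fit together into a braided cobordism (the lifts depend smoothly on the parameter away from the repair times, and across a repair time the change is exactly a braid isotopy together with the overpass bookkeeping, which is covered by a braid-isotopy of the level sets). Fourth, I would check that the resulting ambient isotopy from $W$ to this braided cobordism is the identity on $\partial W$: this holds because $D_0, D_1$ were already threaded, so near the ends no repair was needed and the fiberwise lift reproduces the given closed braids $B_0, B_1$ on the nose.

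The main obstacle I expect is the third step — showing that the fiberwise lifts assemble into a \emph{smoothly embedded} surface rather than just a continuous family of closed braids, and controlling what happens precisely at the repair times. Lemma~\ref{lem:threading} as stated produces, for a single threaded diagram, \emph{some} closed braid with that diagram; to build a cobordism I need the lift to be canonical enough to vary in families, and I need the overpass-adjustment moves (which change $S$ and $F$ discretely) to correspond to honest ambient isotopies of the level curves in $\R^3$, not merely to isotopies of their diagrams. Morton's construction does give a preferred lift once the threading and the over/under data along $h$ are fixed (the remark after Lemma~\ref{lem:threading} notes the over/under assignment along $h$ is forced), so the lift is well-defined; the work is to see that an elementary change of overpasses, or a finger-detour across $h$, lifts to a local braid isotopy supported in a ball. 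I would handle this by writing down the two or three local models explicitly and checking each lifts to an isotopy fixing the braid outside a small box, which is essentially the content of Morton's proof of Markov's theorem, here applied parametrically. Once these local lemmas are in hand, the global statement follows by concatenating the finitely many local isotopies.
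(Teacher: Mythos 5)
Your reformulation---fix $D_0$ and slide the axis $h_\alpha = \phi_\alpha^{-1}(h)$---is a natural dual picture, and your first two normalization steps echo what the paper does via Lemma~\ref{lem:movingoverpasses}.  However, there is a genuine gap in steps two and three, and it is the crux of the whole proposition.  You propose to ``repair'' the family so that every intermediate diagram is threaded, lift fiberwise via Lemma~\ref{lem:threading}, and assert that the lifts assemble into a braided cobordism whose level sets are joined by braid isotopies.  This cannot work in general, because the braid index of a threaded diagram equals the number of overpass points $|S|=|F|$, and nothing forces $D_0$ and $D_1$ to be threaded with the same index (nor can you choose the threadings of $\partial W$---they are given).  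A continuous family of threaded diagrams all lifted to closed braids by the same recipe has locally constant index, so when the index genuinely changes the fiberwise lifts \emph{jump} and do not glue into an embedded surface.  Your ``detour'' move that pushes a finger back across $h$ while re-choosing $(S,F)$ is exactly a simple Markov stabilization/destabilization in disguise, and this is \emph{not} a braid isotopy: it changes the conjugacy class of the braid and its index.  Calling it an ``ambient isotopy of $W$ supported away from $\partial W$'' conflates an isotopy of the abstract surface with a level-preserving isotopy of closed braids, which is precisely what fails.

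The paper resolves this by \emph{not} trying to keep the resulting braided cobordism critical-point free.  After normalizing the overpasses (Lemma~\ref{lem:movingoverpasses}) and replacing the bad type-III configurations of Figure~\ref{fig:badR3}, Lemma~\ref{lem:liftingdiagramtransformations} lifts the diagram transformations to a mix of braid isotopies \emph{and simple Markov equivalences}.  The final step of the proof of Proposition~\ref{prop:planarisotopy} then takes the disk $R$ spanning each simple Markov equivalence, decomposes it as a boundary sum $R=R'\cup R''$ with $R'$ meeting the $z$-axis once and $\partial R'$ monotone in the angular coordinate, and pushes $R'$ out of the slice to create a local max or min while $R''$ becomes a saddle band.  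In other words, the braided cobordism you are constructing necessarily acquires new Morse critical points exactly where the braid index changes; your plan has no mechanism to produce them, so the ``repair times'' in your argument are precisely where the construction breaks.  If you replace your fiberwise-lifting claim with an explicit local model that trades a Reidemeister-II-like crossing of $h$ for a (max, saddle) or (min, saddle) pair, you would recover the paper's argument.
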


In order to prove the above proposition we will need to lift the planar isotopy joining $D_0$ and $D_1$ to a sequence of braid isotopies and simple Markov equivalences in $\mathbb{R}^3$.  For the rest of this section we assume $W$ is as described in the statement of Proposition \ref{prop:planarisotopy}.  The first lemma we will need is the following:   

\begin{lemma}
\label{lem:braidisotopylifting}
Let $\psi_{\alpha}$ be a planar isotopy of $P$ taking $D_0$ to $D_1$ which fixes $h$ setwise.  Suppose further that $\psi_{\alpha} \equiv \psi_0$, and $\psi_{1-\alpha} \equiv \psi_1$ for $\alpha$ in a small neighborhood of 0.  Then there is a braid isotopy $\phi_{\alpha}$ taking $W_0$ to $W_1$, such that $\pi' \circ\phi_{\alpha} (W_0) = \psi_{\alpha}(D_0)$ for all $\alpha \in \left[0,1\right]$.
\end{lemma}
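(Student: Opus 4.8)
The plan is to lift the planar isotopy fiber by fiber: Morton's threading construction (Lemma~\ref{lem:threading}) turns each diagram $D_\alpha := \psi_\alpha(D_0)$ into a closed braid, and I would argue that this can be carried out coherently in $\alpha$, producing a continuous family of embeddings of $W_0$ into $\R^3$ which the isotopy extension theorem then straightens into an ambient isotopy. (One cannot simply take $\phi_\alpha(x,y,z) = (\psi_\alpha(x,z),y)$: although such a map covers $\psi_\alpha$ and preserves the $z$-axis, it destroys the monotonicity of the angular coordinate, so $\phi_\alpha(W_0)$ need not be a closed braid.)

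First I would verify that $D_\alpha$ is threaded by $h$ for \emph{every} $\alpha$. Each $\psi_\alpha$ is a homeomorphism of $P$, so it carries the regular diagram $D_0$ to a regular diagram $D_\alpha$ with the same crossing combinatorics; and since $\psi_\alpha$ fixes $h$ setwise, $D_\alpha \cap h = \psi_\alpha(D_0 \cap h)$, so the way $D_\alpha$ meets $h$ is also combinatorially constant and no tangency with $h$ is ever created. Because $\psi_0 = \mathrm{id}$ and the two components of $P \setminus h$ cannot be interchanged continuously, $\psi_\alpha(P_+) = P_+$ and $\psi_\alpha(P_-) = P_-$ for all $\alpha$. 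Transporting the overpasses $(S,F)$ and the over/under data along $h$ through $\psi_\alpha$, all the defining conditions of a threading are preserved, so $D_\alpha$ is threaded by $h$ with respect to the overpasses $(\psi_\alpha(S),\psi_\alpha(F))$.

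Next I would upgrade Lemma~\ref{lem:threading} to a family statement. Its proof is constructive: most of the link is kept near $P$ (pushing the under-branch of each crossing slightly off $P$), while near $h$ one performs the threading modification, routing the overpass arcs around the $z$-axis so that the angular coordinate becomes monotone. Since all the input data — the diagram $D_\alpha$, its double points, the points of $D_\alpha \cap h$, and the sets $\psi_\alpha(S),\psi_\alpha(F)$ — vary continuously in $\alpha$ with fixed combinatorics, this recipe can be performed coherently, yielding a continuous family of closed braids $L_\alpha \subset \R^3$ with $\pi'(L_\alpha) = D_\alpha$, i.e., a continuous family of embeddings $f_\alpha \colon W_0 \hookrightarrow \R^3$ with image $L_\alpha$. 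I expect this to be the step requiring the most care: one must check that the auxiliary choices in the threading construction (the sizes of the modifications, the precise routes of the dragged arcs) can be made to depend continuously on $\alpha$, which is possible exactly because none of the combinatorial data changes along the isotopy.

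Finally I would match the endpoints and extend to an ambient isotopy. Both $W_0$ and $L_0$ are closed braids realizing the same threaded diagram $D_0$, and any two such are related by a braid isotopy whose projection to $P$ is constantly $D_0$ (this is implicit in the converse part of Morton's threading discussion, that every closed braid is braid isotopic to one in threaded position); similarly for $L_1$ and $W_1$. Using that $\psi_\alpha$ is constant for $\alpha$ near $0$ and near $1$ — intervals on which $D_\alpha \equiv D_0$ and $D_\alpha \equiv D_1$ respectively — I would insert these two braid isotopies at the ends of the family $(f_\alpha)$ constructed above, obtaining a continuous family of embeddings of $W_0$ into $\R^3$ that starts at the inclusion, ends on $W_1$, has closed-braid image at every parameter, and composes with $\pi'$ to give $D_\alpha = \psi_\alpha(D_0)$ throughout. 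The isotopy extension theorem, applied inside a large ball containing every $L_\alpha$, then converts this family of embeddings into an ambient isotopy $\phi_\alpha$ of $\R^3$ with $\phi_0 = \mathrm{id}$, $\phi_1(W_0) = W_1$, each $\phi_\alpha(W_0)$ a closed braid, and $\pi' \circ \phi_\alpha(W_0) = \psi_\alpha(D_0)$, as required.
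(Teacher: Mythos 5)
Your argument is correct in broad outline but takes a genuinely different (and considerably heavier) route than the paper. The paper's proof is a direct pointwise lift: for each $p\in W_0$, the $x$- and $z$-coordinates of $\phi_\alpha(p)$ are read off from $\psi_\alpha$, and the $y$-coordinate is then chosen so that the cylindrical radial coordinate of $\phi_\alpha(p)$ is constant in $\alpha$; the endpoint $\phi_1(W_0)$ is then carried to $W_1$ by a straight-line isotopy, using the fact that two closed braids with the same diagram are braid isotopic. Note in particular that the paper does \emph{not} use the naive lift $(x,y,z)\mapsto(\psi_\alpha(x,z),y)$ that your parenthetical rightly rejects --- keeping $y$ fixed and keeping $r$ fixed are different prescriptions, and the paper's choice is designed so that the angular coordinate behaves. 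By contrast, you reconstruct the lift by running Morton's threading construction (Lemma~\ref{lem:threading}) in an $\alpha$-parametrized family and then invoke the isotopy extension theorem. This makes the closed-braid condition visible at each step, but it costs you two things that the paper's proof does not need. First, your step~(1) tacitly assumes $D_0$ is a \emph{threaded} diagram; that is not a hypothesis of the lemma as stated (not every closed braid projects to a threaded diagram --- the paper only records that every closed braid is braid isotopic to one whose diagram is threaded). It does happen to hold in the one place the lemma is used (inside Lemma~\ref{lem:liftingdiagramtransformations}, where $D^{j-1}$ is threaded), but you should say explicitly that you are importing this. Second, the ``family version'' of the threading construction is asserted rather than proved, and as you note yourself this is where the real content of your argument would have to go; the paper sidesteps it entirely. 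So the comparison is: the paper's proof is short, elementary, and threading-free, but terse enough that the reader must supply the check that the constructed $\phi_\alpha(W_0)$ really is a closed braid for every $\alpha$; your proof makes that check structural but at the price of an extra hypothesis and an unproved family lemma.
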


\begin{proof}
For any $p \in W_0$ and $\alpha \in \left[0,1\right]$, the $x$ and $z$-coordinate of $\phi_{\alpha} (p)$ are determined by $\psi_{\alpha}$.  The $y$-coordinate of $\phi_{\alpha} (p)$ can then be chosen uniquely so that the radial coordinate of $\phi_{\alpha}(p)$ remains constant for all $\alpha$.  It thus suffices to note that any two closed braids with the same diagram are also braid isotopic, via a straight line isotopy. 
\end{proof}

Let $(S_0,F_0), (S_1,F_1) \subset P$ denote the overpasses chosen for the threadings of $D_0$ and $D_1$ respectively, and let $\psi_{\alpha}$ denote a planar isotopy of $P$ associated to the movie presentation of $W$, i.e., $\psi_{\alpha}(D_0) = D_{\alpha}$ for all $\alpha \in \left[0,1\right]$.  We can assume that 
\[
S_0 \cap \psi^{-1}_1(S_1) =  F_0 \cap \psi^{-1}_1(F_1) = \emptyset.
\] 

The following lemma will allow us to assume that the choices of overpasses for both $D_0$ and $D_1$ coincide, and that they can be assumed to be fixed by the planar isotopy $\psi_{\alpha}$.  

\begin{lemma}
\label{lem:movingoverpasses}
$W$ is isotopic relative its boundary to a cobordism whose movie presentation is determined by the diagram $D_0$ and a planar isotopy $\varphi_{\alpha}$, where $\varphi_{\alpha}(S_0) = S_0$ and $\varphi_{\alpha}(F_0) = F_0$ for $0 \leq \alpha \leq 1/2$, and where $\varphi_{\alpha}(S_1) = S_1$ and $\varphi_{\alpha}(F_1) = F_1$ for $1/2 \leq \alpha \leq 1$.
\end{lemma}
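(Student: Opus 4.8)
The plan is to produce the desired cobordism in two halves, modifying the overpass sets over the first and second thirds of the $t$-interval while leaving the middle essentially as the original planar isotopy. First I would observe that moving a finite set of points $S_0, F_0$ (lying off the crossing set $C$ of $D_0$ and alternating appropriately along each component) to a different admissible choice of overpasses can be realized by a planar isotopy of $P$ supported in a small neighborhood of arcs of the diagram, away from the crossings. Concretely: since $S_0 \cap \psi_1^{-1}(S_1) = F_0 \cap \psi_1^{-1}(F_1) = \emptyset$, the sets $S_0$ and $\psi_1^{-1}(S_1)$ (resp.\ $F_0$ and $\psi_1^{-1}(F_1)$) are disjoint finite subsets of $D_0 \setminus C$; I would choose, for each point of $S_0$, a short arc in $D_0 \setminus C$ connecting it to the corresponding point of $\psi_1^{-1}(S_1)$ (matching up the alternating patterns along each component, which is possible after possibly a cyclic relabelling), and similarly for $F_0$. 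Sliding points along these arcs gives a planar isotopy $\mu_\alpha$ of $P$ with $\mu_0 = \mathrm{id}$ and $\mu_1(S_0) = \psi_1^{-1}(S_1)$, $\mu_1(F_0) = \psi_1^{-1}(F_1)$, chosen to fix $h$ setwise (the arcs can be taken to avoid $h$, or to cross it in a controlled way preserving the threading conditions).

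Next I would assemble the reparametrized family. Define $\varphi_\alpha$ so that: on $[0,\tfrac{1}{3}]$ it equals $\mu$ run at a suitable speed, carrying $D_0$ to itself via planar isotopies that fix $S_0, F_0$ pointwise — wait, more precisely we want $\varphi_\alpha(S_0) = S_0$ on $[0,1/2]$; so on the first portion $\varphi_\alpha$ should be an ambient planar isotopy fixing $S_0$ and $F_0$ setwise while ``pre-composing'' the overpass data, which is automatic if we simply take $\varphi_\alpha = \mathrm{id}$ there and absorb the overpass-change into the threading bookkeeping. Let me instead run the construction as: on $[0, 1/2]$ take $\varphi_\alpha = \psi_{g(\alpha)}$ where $g:[0,1/2] \to [0,1]$ is an orientation-preserving reparametrization with $g \equiv 0$ near $0$ and $g(1/2) = $ (the parameter where we want to ``insert'' the overpass move); near $\alpha = 1/2$ interpolate through $\psi_\cdot$ composed with the correcting isotopy that adjusts overpass positions so that from $\alpha = 1/2$ onward the overpasses $S_1, F_1$ are fixed. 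The cobordism built from $D_0$ and $\varphi_\alpha$ differs from $W$ only by a planar isotopy on each side, hence (by Lemma \ref{lem:braidisotopylifting} applied to each half, or directly since planar isotopies of $D_t$ lift to ambient isotopies of $W_t$ keeping it a valid level set) the new cobordism is ambient isotopic to $W$ relative to $\partial W$: the endpoints $D_0$ and $D_1$ are unchanged, and $\partial W \subset S^3 \times \{0,1\}$ is fixed throughout.

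The main obstacle I anticipate is ensuring the overpass-moving isotopy $\mu_\alpha$ can be chosen to respect \emph{all} the constraints of a threading simultaneously: the points of $S$ must stay in $P_- = P \cap \{x<0\}$ and points of $F$ in $P_+ = P \cap \{x>0\}$, the alternation of $S$ and $F$ along each component must be preserved at every intermediate time, and no point may collide with a crossing in $C$ or with another overpass point. This is a bookkeeping problem about moving finitely many points along a 1-complex ($D_0$ minus crossings) while respecting a cyclic order and a two-sided partition; generically it is unobstructed — one can always route the arcs through $P_-$ or $P_+$ as required and perform the slides one point at a time — but writing it carefully requires a short combinatorial argument that the target configuration $(\psi_1^{-1}(S_1), \psi_1^{-1}(F_1))$ is ``isotopic through admissible configurations'' to $(S_0, F_0)$. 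I would handle this by first performing braid isotopies (via Lemma \ref{lem:threading} and its converse) to standardize both threaded diagrams so their overpass sets sit in a normal form — e.g.\ one $S$-point and one $F$-point per ``strand'' near a standard fiber disk — at which point the matching is immediate; the reparametrization trick above then splices this standardization into the movie over the outer thirds of $[0,1]$ without disturbing the boundary.
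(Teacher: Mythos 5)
Your proposal diverges from the paper's argument at the very first step, and the divergence creates a gap you notice but never resolve. You set out to slide $S_0$ and $F_0$ along the diagram to the positions $\psi_1^{-1}(S_1)$ and $\psi_1^{-1}(F_1)$, but the lemma requires $\varphi_\alpha(S_0) = S_0$ for $0 \le \alpha \le 1/2$ — any isotopy that moves $S_0$ during the first half immediately violates the conclusion. You catch this (``wait, more precisely we want $\varphi_\alpha(S_0)=S_0$ on $[0,1/2]$'') and then cycle through two incompatible fixes: take $\varphi_\alpha = \mathrm{id}$ and ``absorb the overpass-change into the threading bookkeeping'' (which is exactly the content that needs to be proved, not waved away), and then take $\varphi_\alpha = \psi_{g(\alpha)}$ (which does not fix $S_0, F_0$ unless $\psi$ already did, which is not assumed). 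The paper's approach sidesteps the entire problem: $S_0$ and $F_0$ are never moved. One first reparametrizes so the movie is constant on $[0,1/2]$ and equal to $\psi_{2\alpha-1}$ on $[1/2,1]$, then postcomposes with a one-parameter family of planar isotopies $\xi$ supported in disjoint neighborhoods $A_q$ of the arcs $\{\psi_\beta^{-1}(q):\beta\in[0,1]\}$ for $q\in S_1\cup F_1$. Since the $A_q$ are chosen disjoint from $S_0\cup F_0$, the first-half condition holds for free; the second-half condition holds because $\xi$ is rigged so that $\psi_{2\alpha-1}\circ\xi_{2-2\alpha}$ is stationary on the pulled-back overpass points. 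The combinatorial ``main obstacle'' you flag — routing slides through $P_\pm$ while preserving alternation — is simply not present, because nothing is slid.

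There is a second, equally important, omission. Even where your writeup settles on some $\varphi_\alpha$ with the desired endpoint properties, you assert that the cobordism determined by $D_0$ and $\varphi_\alpha$ is isotopic to $W$ rel $\partial W$ because ``the endpoints $D_0$ and $D_1$ are unchanged.'' This does not follow: Figure \ref{fig:twistspun} in the paper is exactly an example of two critical-point-free cobordisms with identical boundary but not isotopic rel boundary. What is needed is a homotopy through planar isotopies from the original family to the new one, rel endpoints, which then lifts to an ambient isotopy of the surface fixing $\partial W$. The paper supplies this explicitly via the two-parameter family $\Gamma_{\tau,\alpha}$ and the interpolation $\Phi_\alpha\circ\Gamma_{\tau,\alpha}$, with $\Gamma_{\tau,0}=\Gamma_{\tau,1}=\mathrm{id}$ for all $\tau$ guaranteeing the boundary is fixed. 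Your proposal has no analogue of this step, and without it the rel-boundary claim is unsupported. The closing suggestion — to standardize both threaded diagrams to a normal form via braid isotopies and splice — would need to be carried out inside such a homotopy argument as well, and would also need to verify that $(\psi_1^{-1}(S_1),\psi_1^{-1}(F_1))$ is even an admissible threading configuration for $D_0$, which is not automatic.
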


\begin{proof}
We can assume that for all $q \in S_1\cup F_1$, the sets $\{\psi^{-1}_{\alpha}(q) \text{ } | \text{ } 0 \leq \alpha \leq 1 \}$ are disjoint embedded arcs in $P$ which do not intersect $S_0 \cup F_0$ (see for example Lemma 10.4 of \cite{Burde}).  For each $q \in S_1\cup F_1$ choose a small regular neighborhood $A_q$ of $\{\psi^{-1}_{\alpha}(q) \text{ } | \text{ } 0 \leq \alpha \leq 1 \}$, so that the $A_q$ are pairwise disjoint and also do not intersect $S_0 \cup F_0$.  

Now let $\xi_{\alpha}$ be a planar isotopy of $P$ which restricts to the identity on the complement of $\bigcup A_q$, and such that for all $\alpha \in \left[0,1\right]$ and all $p \in \psi^{-1}_1 (S_1 \cup F_1)$ we have $\xi_{\alpha}(p) = \psi^{-1}_{1-\alpha}\circ \psi_1(p)$.  Let $\Gamma_{\tau,\alpha}$ be the one parameter family of planar isotopies of $P$, with $\tau \in [0,1]$, defined by 
\[
 \Gamma_{\tau,\alpha} =
  \begin{cases}
   \xi_{2\tau \alpha} & \text{if } 0 \leq \alpha \leq 1/2 \\
   \xi_{\tau(2-2\alpha)}  & \text{if } 1/2 \leq \alpha \leq 1.
  \end{cases}
\]

After an isotopy of $W$ which rescales the $t$-coordinate, we can arrange so that the movie presentation of $W$ is instead described by the planar isotopy
\[
 \Phi_{\alpha} =
  \begin{cases}
   \text{id}_P & \text{if } 0 \leq \alpha \leq 1/2 \\
   \psi_{2\alpha -1}  & \text{if } 1/2 \leq \alpha \leq 1.
  \end{cases}
\]

Now consider the composition $\Phi_{\alpha} \circ \Gamma_{\tau,\alpha}$.  Letting $\tau$ range from 0 to 1 shows that the surface $W$, which is described by the diagram $D_0$ and the planar isotopy $\Phi_{\alpha} = \Phi_{\alpha} \circ \Gamma_{0,\alpha}$, is isotopic to a surface described by $D_0$ and the planar isotopy 
\[
 \varphi_{\alpha}:=\Phi_{\alpha} \circ \Gamma_{1,\alpha}=
  \begin{cases}
   \xi_{2\alpha} & \text{if } 0 \leq \alpha \leq 1/2 \\
   \psi_{2\alpha -1} \circ \xi_{2-2\alpha} & \text{if } 1/2 \leq \alpha \leq 1.
  \end{cases}
\]

As the $\xi_{\alpha}$ is the identity outside of $\bigcup A_q$, for any $p \in S_0 \cup F_0$ and any $\alpha \in \left[0,1/2\right]$ we have $\varphi_{\alpha} (p) = \xi_{2\alpha}(p) = p$.  For $\alpha \in \left[1/2,1\right]$ and $q \in S_1 \cup F_1$ we have 
\[
\varphi_{\alpha} (q) = \psi_{2\alpha - 1} \circ \xi_{2-2\alpha} (q) = \psi_{2\alpha - 1} \circ \psi^{-1}_{1-(2-2\alpha)} (q) = q
\]
as required.  Note that all the isotopies described above fix $W_0 \cup W_1 = \partial W$.
\end{proof}

By the above lemma it is enough to prove Proposition~\ref{prop:planarisotopy} in the case when $S=S_0 = S_1$, $F=F_0 = F_1$, and all points in $S \cup F$ are fixed by $\psi_{\alpha}$.  Indeed, since the points in $S_0\cup F_0$ are stationary during the first half of the planar isotopy $\varphi_{\alpha}$, and since they form a choice of overpasses for which $D_0$ is threaded, they must also form a choice of overpasses which give rise to a threading of $D_{1/2}$ .  Likewise, $D_{1/2}$ is also threaded by $h$ with the choice of overpasses $(S_1,F_1)$, since they remain stationary for during the second half of $\varphi_{\alpha}$ and give a threading of $D_1$.  By Lemma~\ref{lem:threading} we can arrange $W$ locally near $\mathbb{R}^3 \times \{\frac{1}{2}\}$ so that $W_{1/2}$ is a closed braid with diagram $D_{1/2}$ threaded with either choice of overpasses, and prove Proposition~\ref{prop:planarisotopy} for $W_{[0,1/2]}$ and $W_{[1/2,1]}$.

Suppose then that $W$ is as above.  Although the movie presentation of $W$ does not involve any Reidemeister moves, it will (after perturbing $W$ slightly away from the boundary) contain Reidemeister II and III-like moves involving components of the diagrams and the $z$-axis $h$ (see Figure \ref{fig:reidemeisternocrossing}).  These Reidemeister-like moves are like classical Reidemeister moves, but where no crossing information is specified at double points of the projection involving $h$.  The absence of crossing information with $h$ reflects the fact that the movie presentation of $W$ does not specify the relative position of the links $W_t$ above or below $P$, and that the components of the link are free to pass through the $z$-axis during isotopies in $\mathbb{R}^3$.
\begin{figure}
 \centering
 \includegraphics[width=\textwidth]{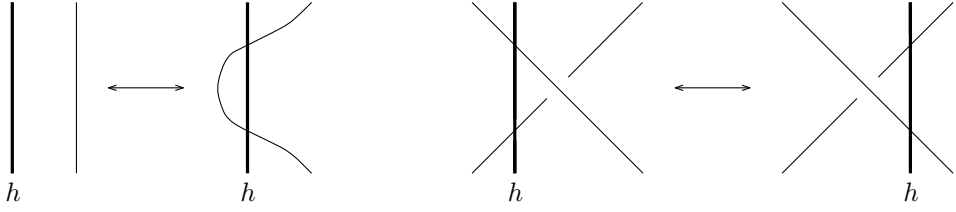}
  \caption{Reidemeister like moves involving $h$}
  \label{fig:reidemeisternocrossing}
\end{figure}

We can thus break the planar isotopy $\psi_{\alpha}$ determining $W$ into a sequence of transformations that take into account the relative position of the diagrams $D_t$ with $h$.  More precisely, we can divide the interval $\left[0,1\right]$ into smaller subintervals $\left[t_{j-1},t_j\right]$, such that for each $j$ there is either
\begin{enumerate}
\item a planar isotopy $\phi^j_{\alpha}$ of $P$, which fixes $h$ setwise and has $\phi^j_{\alpha} (D_{t_{j-1}}) = D_{t_{j-1}+\alpha (t_j-t_{j-1})}$ for all $\alpha \in \left[0,1\right]$, or
\item a Reidemeister-like move of type II or III taking $D_{t_{j-1}}$ to $D_{t_j}$ involving (but fixing) $h$.    
\end{enumerate}
We will simplify notation and write $D^j$ and $W^j$ instead of $D_{t_j}$ and $W_{t_j}$ respectively, for each $j$.  Since we are assuming that the points of $S \cup F$ are fixed throughout the planar isotopy $\psi_{\alpha}$, we can fix $(S,F)$ as a choice of overpass for each $D^j$.  Furthermore for each diagram we fix the unique choice of $h$-crossing information so that $D^j$ is threaded by $h$.

Before proceeding, we need to eliminate any situations as in Figure \ref{fig:badR3}.  Here we have a Reidemeister-like move of type III where the center crossing cannot pass to the other side of $h$ without first introducing crossing changes.  These can be eliminated by making a local replacement as in Figure \ref{fig:fixbadR3a}, where the offending move has been replaced by a sequence consisting of three Reidemeister-like moves, two of type II and one of type III (which lifts to an isotopy avoiding the $z$-axis).  This local replacement does not change the isotopy class of $W$ rel $\partial W$.

\begin{figure}
 \centering
 \includegraphics[width=0.5\textwidth]{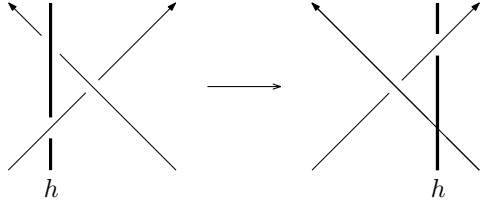}
  \caption{Reidemeister-like move of type III which does not lift to a braid isotopy}
  \label{fig:badR3}
\end{figure}

\begin{figure}
 \centering
 \includegraphics[width=\textwidth]{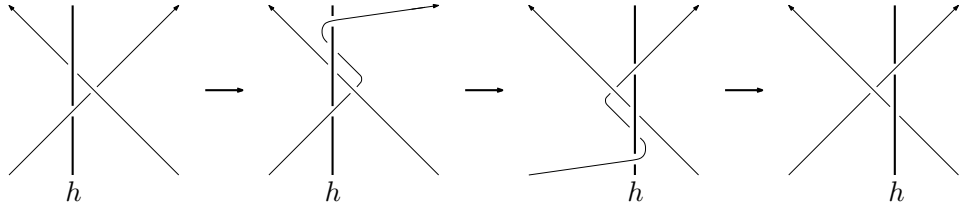}
  \caption{Replacing bad Reidemeister-like moves of type III with sequence of moves that lift to braid isotopies and simple Markov equivalences}
  \label{fig:fixbadR3a}
\end{figure}

\begin{lemma}
\label{lem:liftingdiagramtransformations}
Suppose that $W^{j-1}$ is a closed braid.  Then the transformation $D^{j-1} \rightarrow D^{j}$ lifts to $\mathbb{R}^3$ as a sequence of braid isotopies and simple Markov equivalences on $W^{j-1}$. 

\end{lemma}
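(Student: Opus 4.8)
The plan is to lift each of the two kinds of elementary transformation $D^{j-1}\to D^{j}$ separately, using throughout that $W^{j-1}$ is a closed braid whose (threaded) diagram is $D^{j-1}$.

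For a transformation of type (1) --- realized by a planar isotopy $\phi^j_\alpha$ of $P$ fixing $h$ setwise, with $\phi^j_\alpha(D^{j-1})$ interpolating from $D^{j-1}$ to $D^{j}$ --- I would first reparametrize in $\alpha$ so that $\phi^j_\alpha$ is constant for $\alpha$ near $0$ and near $1$, a change that can be absorbed into a reparametrization of $W$ near $\R^3\times\{t_{j-1}\}$ and $\R^3\times\{t_j\}$ and so affects neither $W$ nor the statement. Lemma~\ref{lem:braidisotopylifting} then applies directly and produces a braid isotopy $\phi_\alpha$ of $\R^3$ with $\pi'\circ\phi_\alpha(W^{j-1})=\phi^j_\alpha(D^{j-1})$ for all $\alpha$; in particular $\phi_1(W^{j-1})$ is a closed braid with diagram $D^{j}$. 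Since any two closed braids with the same diagram are themselves braid isotopic (as in the proof of Lemma~\ref{lem:braidisotopylifting}), this handles type (1) with a braid isotopy, possibly followed by one more.

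For a transformation of type (2) --- a single Reidemeister II- or III-like move with $h$, supported in a small disk $\Delta\subset P$ --- I would lift it keeping the radial coordinate (distance to the $z$-axis) of every point of $W^{j-1}$ outside $\pi^{-1}(\Delta)$ fixed. If the move is a Reidemeister III-like move with $h$, then --- the ``bad'' configurations of Figure~\ref{fig:badR3} having already been replaced by the sequences of Figure~\ref{fig:fixbadR3a} --- the crossing of the two link strands involved can be slid past $h$ without any strand being pushed onto the $z$-axis and with the angular coordinate kept monotone along those strands, so the induced local move on $W^{j-1}$ is a braid isotopy. If the move is a Reidemeister II-like move with $h$, then a short arc of a single strand of the diagram is pushed across $h$, creating (or, read backwards, deleting) exactly two of its crossings with $h$, necessarily of opposite direction (one $P_-\to P_+$ and one $P_+\to P_-$). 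Here I would use that the crossing data along $h$ on both $D^{j-1}$ and $D^{j}$ is the unique threading-consistent assignment: this forces the lifted arc to wind exactly once around the $z$-axis, so the move changes the winding of the corresponding component of $W^{j-1}$ by $\pm1$, and the disk cobounded by the arc before and after the move meets the $z$-axis transversely in one point and is precisely the disk $R$ of a simple Markov equivalence, the clockwise/counterclockwise behaviour of $\partial R$ being dictated by the threading convention. In every case the outcome is a closed braid with diagram $D^{j}$ --- namely $W^{j}$, up to a further braid isotopy among closed braids with that diagram.

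The step I expect to be the main obstacle is the Reidemeister II-like case: one must match, in each of the finitely many local pictures, the fixed threading-consistent over/under data along $h$ against the behaviour of the lifted arc near the $z$-axis, confirm that it necessarily winds once around the axis, and then verify that the resulting modification of $W^{j-1}$ is exactly a simple Markov equivalence in the sense defined above --- this is where the argument will lean on (an enhancement of) Morton's analysis of the threading construction in \cite{morton}. A subsidiary point I would be careful about is that every move of either type preserves the threading property with the fixed choice of overpasses $(S,F)$, so that each $D^{j}$ really does lift to a closed braid $W^{j}$: this holds because none of these moves alters the self-crossings of the link diagram, moves $h$, or moves the points of $S\cup F$, so the overpass conditions persist and only the crossing data along $h$ must be reassigned, which can always be done and uniquely.
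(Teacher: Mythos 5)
Your proposal matches the paper's proof: type (1) transformations are lifted by Lemma~\ref{lem:braidisotopylifting}, type III-like moves give braid isotopies once the bad configurations of Figure~\ref{fig:badR3} have been excised, and type II-like moves yield simple Markov equivalences. Your more explicit account of the type II case---that the threaded lift of the new arc must wind once more around the axis and that the cobounded disk is exactly the spanning disk $R$ of a simple Markov equivalence---is consistent with, and fills in detail behind, the paper's appeal to Figure~\ref{fig:reidemeistertypeII}.
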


\begin{proof}
Note first that since $W^{j-1}$ is a closed braid and $D^{j-1}$ is threaded, the $h$-crossing information on $D^{j-1}$ will match that coming from the projection of $W^{j-1}$. 

For transformations of type (1) above, Lemma \ref{lem:braidisotopylifting} shows that the planar isotopy between $D^{{j-1}}$ and $D^{j}$ can be lifted to a braid isotopy on $W^{{j-1}}$.

Suppose now that $D^j$ is obtained from $D^{j-1}$ by a Reidemeister-like move of type II (or its inverse) as in Figure \ref{fig:reidemeisternocrossing}.  Then as $D^{j-1}$ is threaded, locally it must look like either the right or left-hand side of one of the transformations in Figure \ref{fig:reidemeistertypeII}.  Note that by assumption no points of $S$ or $F$ can occur anywhere in these local pictures.  Clearly $D^j$ can be lifted to a closed braid $W^j$ which agrees with $W^{j-1}$ away from the Reidmeister-like move of type II, so that $W^{j-1}$ and $W^j$ are simply Markov equivalent.
\begin{figure}
 \centering
 \includegraphics[width=0.75\textwidth]{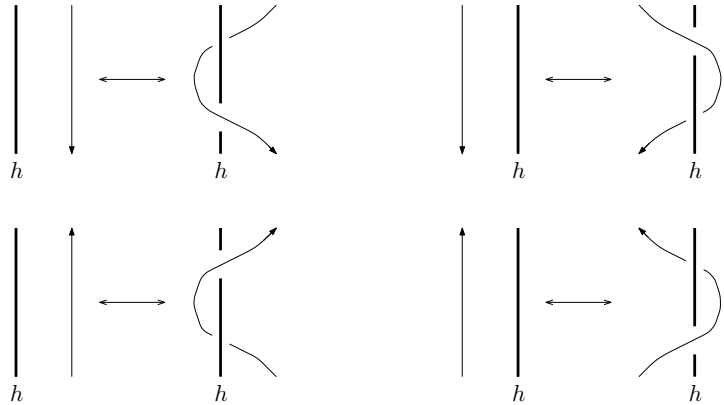}
  \caption{Reidemeister-like moves of type II}
  \label{fig:reidemeistertypeII}
\end{figure}

Now suppose that $D^j$ is obtained from $D^{j-1}$ by a Reidemeister-like move of type III.  It is easy to verify that for most configurations of $D^{j-1}$ the move can be lifted to a braid isotopy taking $W^{j-1}$ to a closed braid $W^j$ with diagram $D^j$.  The only exceptions arise as in the Figure \ref{fig:badR3}, but these were all replaced previously by sequences of moves that can be lifted.  
\end{proof}

Starting with the closed braid $W_0 \subset \mathbb{R}^3 \times \{0\}$, we can construct a new surface $W'$ by tracing the path of $W_0$ in $\mathbb{R}^3 \times \left[0,1\right]$ as we apply the sequence of lifted braid isotopies and simple Markov equivalences obtained from the previous lemma.  Away from the simple Markov equivalences each level set $W'_t$ will be a closed braid.  By construction, the movie presentation of $W'$ will be the same as that of $W$, hence it will be isotopic to $W$ rel $\partial W'$.  To prove Proposition \ref{prop:planarisotopy} it thus remains only to show that $W$ can be braided in neighborhoods of the simple Markov equivalences.          
 
\begin{proof}[Proof of Proposition \ref{prop:planarisotopy}]
Suppose that for some $s \in \left[0,1\right]$ and $\varepsilon >0$ the closed braids $W_{s-\varepsilon}$ and $W_{s+\varepsilon}$ differ by a simple Markov equivalence spanned by a disk $R$.  After a small isotopy in the neighborhood of the hyperplane $\mathbb{R}^3 \times \{s\}$ we can assume that $R$ lies entirely in this hyperplane, and that the orthogonal projection of $\partial R$ to the $xy$-plane yields a figure eight.

Decompose $R$ as the boundary sum of two closed disks $R'$ and $R''$ (equipped with the orientation of $W$), where $R'$ intersects the $z$-axis transversely in a single point and where $\partial R'$ is a simple curve which is strictly monotone in the angular direction (see Figure \ref{fig:bandanddisk}).  Push $R'$ to either $\mathbb{R}^3 \times \{s + \varepsilon\}$ or $\mathbb{R}^3 \times \{s - \varepsilon\}$ (depending on whether $\partial R'$ is monotone increasing or decreasing respectively) while keeping $R''$ fixed.  This gives rise to a new maximal disk (minimal disk respectively) while $R''$ yields a new saddle band.  After a slight local perturbation these new critical disks can be changed to isolated critical points, completing the proof of Proposition~\ref{prop:planarisotopy}.  
\end{proof}

\begin{figure}
 \centering
 \includegraphics[width=0.35\textwidth]{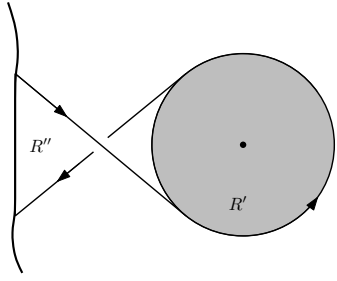}
  \caption{Decomposing $R$ as the boundary sum of $R'$ and $R''$}
  \label{fig:bandanddisk}
\end{figure}

\subsection{Braiding movie presentations with Reidemeister moves}
Now consider an arbitrary critical point free cobordism $W$ between two closed braids.  The movie presentation of $W$ under the projection to $P$ will in general include Reidemeister moves as well as planar isotopies.  Recycling notation from above, let $D_t$ denote the diagram of $W_t$, and divide the interval $\left[0,1\right]$ into smaller subintervals $\left[t_{j-1},t_j\right]$, such that for each $j$ there is either
\begin{enumerate}
\item a planar isotopy $\phi^j_{\alpha}$ of $P$ which has $\phi^j_{\alpha} (D_{t_{j-1}}) = D_{t_{j-1} + \alpha (t_j - t_{j-1})}$ for all $\alpha \in \left[0,1\right]$, or
\item a Reidemeister move taking $D_{t_{j-1}}$ to $D_{t_j}$.    
\end{enumerate}
As above we will simplify notation and write $D^j$ and $W^j$ instead of $D_{t_j}$ and $W_{t_j}$ respectively, for each $j$.  To complete the proof of Theorem \ref{thm:AlexanderMainTheorem} we need the following lemma:

\begin{lemma}
\label{lem:reidemeistermovebraid}
Suppose $D^j$ is obtained from $D^{j-1}$ by a Reidemeister move of any type.  Then there is a planar isotopy $\zeta_{\alpha}$ of $P$, such that $\zeta_1(D^{j-1})$ and $\zeta_1(D^j)$ are both threaded by $h$ for some choice of overpasses, and if $W^{j-1}$ is a closed braid with diagram $\zeta_1(D^{j-1})$, then the Reidemeister move taking $\zeta_1(D^{j-1})$ to $\zeta_1(D^j)$ lifts to a braid isotopy of $W^{j-1}$.
\end{lemma}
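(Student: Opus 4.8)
The plan is to handle the Reidemeister move taking $D^{j-1}$ to $D^j$ by first normalizing the diagrams near the move so that the portion of $D^{j-1}$ (and $D^j$) appearing in the move lies in a small ball disjoint from $h$, and then choosing a threading that is compatible with lifting the move to a braid isotopy. First I would isolate the disk $\Delta\subset P$ in which the Reidemeister move occurs; by a preliminary planar isotopy $\zeta_\alpha$ fixing $h$ I can shrink $\Delta$ and slide it off $h$, so that $\zeta_1(\Delta)\subset P_+$ (say) is disjoint from $h$ and the move takes place entirely there. This is harmless because all of $W^{j-1}$'s level set structure outside $\Delta$ is unchanged, and planar isotopies fixing $h$ setwise lift to braid isotopies by Lemma~\ref{lem:braidisotopylifting}.

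Next I would produce a choice of overpasses $(S,F)$ for which both $\zeta_1(D^{j-1})$ and $\zeta_1(D^j)$ are threaded by $h$. The point is that $\zeta_1(D^{j-1})$ and $\zeta_1(D^j)$ agree outside the ball $\zeta_1(\Delta)$, so it suffices to choose $(S,F)$ with no points inside $\zeta_1(\Delta)$ and such that each arc of each component entering and leaving $\zeta_1(\Delta)$ behaves consistently. Concretely I would route the strands so that each strand involved in the move is, near $\zeta_1(\Delta)$, entirely an overpass-free arc of type $[f,s]$ or an underpass-free arc of type $[s,f]$, placing the points of $S$ and $F$ well away from $\zeta_1(\Delta)$; then threading only constrains the crossings with $h$ (all outside $\zeta_1(\Delta)$) and is unaffected by what happens inside $\zeta_1(\Delta)$. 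One uses Lemma~\ref{lem:threading} to conclude that both diagrams lift to closed braids with the given $(S,F)$, and the unique $h$-crossing assignment agrees on the two diagrams since they coincide near $h$.

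Finally I would check that the Reidemeister move, now localized in a ball disjoint from $h$, lifts to a braid isotopy of $W^{j-1}$. Since $\zeta_1(\Delta)$ is disjoint from $h$, none of the three strands in the move crosses $h$ during the move, so the $y$-coordinates (equivalently, the angular positions) of the three strands stay away from the axis; hence the move can be realized by an ambient isotopy supported in a ball in $\R^3\times\{t\}$ that misses the $z$-axis, and each intermediate level set remains a closed braid. This is exactly the observation already used for ``good'' type-III moves in Lemma~\ref{lem:liftingdiagramtransformations}, and for type-I and type-II moves the same localization argument applies.

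The main obstacle is the second step: guaranteeing that a single choice of overpasses $(S,F)$ threads \emph{both} $\zeta_1(D^{j-1})$ and $\zeta_1(D^j)$ simultaneously, and doing so without creating ``bad'' configurations (as in Figure~\ref{fig:badR3}) that would obstruct lifting the move. The reason this is delicate is that a Reidemeister move can change which arcs are overpasses versus underpasses, so the overpass structure valid for $D^{j-1}$ need not be valid for $D^j$; the fix is precisely to pre-isotope so that the move happens in a region where the relevant arcs are neutral (neither forced over nor forced under, relative to the $S,F$ marking), which is why the shrinking-and-sliding normalization in the first step is doing the real work.
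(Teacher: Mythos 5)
Your approach (slide the Reidemeister disk off the axis $h$, choose overpasses outside the disk, and then lift the move to a braid isotopy supported in a ball missing the $z$-axis) is exactly the paper's strategy for type~I and type~II moves, and it works there. But it fails for type~III moves, and the paper explicitly flags this: ``Moves of type III cannot be arranged to take place away from $h$.''

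Here is why your second step breaks down for type~III. In the local picture of a type~III move there are three strands: one (the top) is an overcrossing strand at both of its crossings, one (the bottom) is an undercrossing strand at both, and one (the middle) has one overcrossing and one undercrossing inside $\zeta_1(\Delta)$. You propose routing so that every strand involved in the move is ``entirely an overpass-free arc of type $[f,s]$ or an underpass-free arc of type $[s,f]$,'' with all $S$ and $F$ points outside $\zeta_1(\Delta)$. That is impossible for the middle strand: since it contains both an overcrossing and an undercrossing inside $\zeta_1(\Delta)$, it must transition between a $[s,f]$ arc and a $[f,s]$ arc inside $\zeta_1(\Delta)$, i.e.\ it must carry at least one $S$- or $F$-point there. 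In fact to interface consistently with an arbitrary outside diagram the paper places both an $s$ and an $f$ on the middle strand near the move; since threading requires $S\subset P_-$ and $F\subset P_+$, this forces $h$ to pass between them and hence through the move region, contradicting your localization. Your closing paragraph gestures at this (``the move happens in a region where the relevant arcs are neutral''), but the obstruction is not relative to $h$ or the $S,F$ marking --- it is the internal crossing pattern of the move itself, which shrinking and sliding cannot change. The paper's fix is genuinely different: it arranges for $h$ to cut through the picture with the uppermost strand crossing over $h$, and then shows directly that the type~III move (together with the top strand sliding over the crossing and over both the nearby $s$ and $f$ points) lifts to a braid isotopy, as in its Figure~\ref{fig:R3threading}. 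You would need to supply an argument of this kind for type~III; the localization argument alone does not cover it.
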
  

To see that this completes the proof of Theorem \ref{thm:AlexanderMainTheorem}, note first that by Theorem 2 of \cite{morton} there are braid isotopies taking $W_0$ and $W_1$ to closed braids whose diagrams in $P$ are threaded by $h$ for some choices of overpasses. Thus we can assume that the diagrams $D_0$ and $D_1$ are both threaded.  We also assume that in the movie presentation of $W$ the sequence involved alternates between planar isotopies and Reidemeister moves, beginning and finishing with planar isotopies.  Suppose for some $j$ that $D^j$ is obtained from $D^{j-1}$ by a Reidemeister move, and let $\phi^{j-1}_{\alpha}$ and $\phi^{j+1}_{\alpha}$ be the planar isotopies taking $D^{j-2}$ to $D^{j-1}$ and $D^j$ to $D^{j+1}$ respectively.  Then we can replace $D^{j-1}$ and $D^j$ with $\zeta_1(D^{j-1})$ and $\zeta_1(D^j)$ respectively, and $\phi^{j-1}_{\alpha}$ and $\phi^{j+1}_{\alpha}$ with $\zeta_{\alpha} \circ \phi^{j-1}_{\alpha}$ and $\zeta_{1-\alpha} \circ\phi^{j+1}_{\alpha}$ respectively, without changing the isotopy class of $W$ rel $\partial W$.  Performing a similar replacement one by one around all Reidemeister moves in the movie presentation, we see that $W$ is isotopic relative its boundary to a cobordism whose movie presentation involves only Reidemeister moves and planar isotopies between threaded diagrams.  

Thus we can assume that each of the $D^j$ are threaded and that the $W^j$ are all closed braids.  By Lemma \ref{lem:reidemeistermovebraid} the portions of $W$ corresponding to planar isotopies in the movie presentation are then isotopic relative their boundaries to braided cobordisms, while by Proposition \ref{prop:planarisotopy} we see that the same is true for portions of $W$ corresponding to Reidemeister moves.  Thus $W$ itself is isotopic relative its boundary to a braided cobordism, completing the proof.  

\begin{proof}[Proof of Lemma \ref{lem:reidemeistermovebraid}]
Begin by making a choice of overpasses for $D^{j-1}$ and $D^j$ which agree outside some small neighborhood of the move in question.  In the small neighborhood of the move we choose points which give a valid choice of overpasses both before and after the move.  See examples of different possible configurations in Figure \ref{fig:Rmovesoverpasses}, where incoming strands are labeled with $o$ if they are part of an overpass, or $u$ if they are part of an underpass.
\begin{figure}
 \centering
 \includegraphics[width=0.6\textwidth]{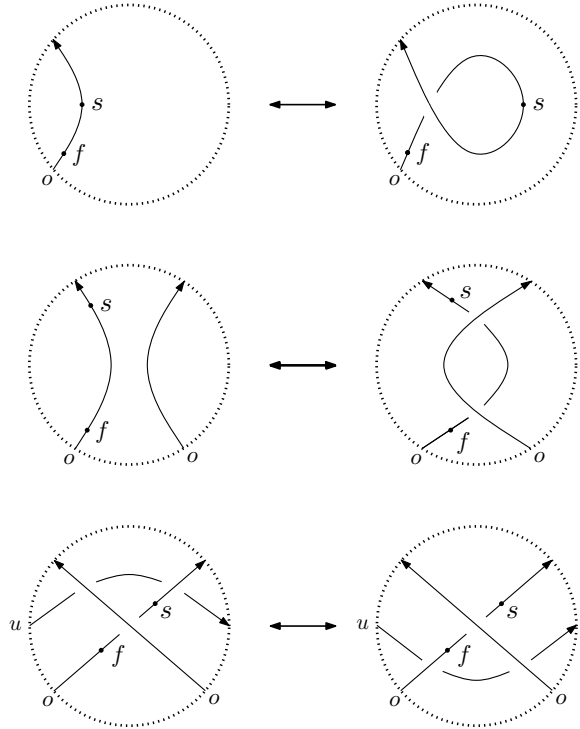}
  \caption{Overpass choices in a neighborhood of type I and II moves}
  \label{fig:Rmovesoverpasses}
\end{figure}  

Now let $\zeta_{\alpha}$ be a planar isotopy which repositions all of the $S$ points to $P_-$ (the left half of the plane $P$), and all the $F$ points to $P_+$ (the right half of $P$).  Once positioned in this way, there is a unique way to assign over and undercrossings of $D^{j-1}$ and $D^j$ with $h$ so that both diagrams are threaded by $h$.     

Note that in the case of moves of type I and II, we can choose $S,F$, and $\zeta_{\alpha}$ so that the Reidemeister move of interest happens away from $h$.  It is then easy to see that the Reidemeister move of interest lifts to a braid isotopy.

Moves of type III cannot be arranged to take place away from $h$ however.  Of the three strands in this local picture, one strand will cross over the other two, one will pass under the other two, while the third will pass over one and under the other.  Choose $S$ and $F$ away from this picture so that the top strand is part of an overcrossing, the bottom strand is part of an undercrossing, and place a single point from each of $S$ and $F$ on the third strand to create a valid choice of overpasses. 
\begin{figure}
 \centering
 \includegraphics[width=0.56\textwidth]{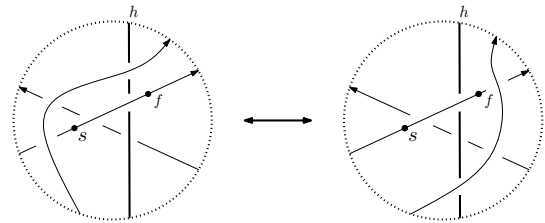}
  \caption{Threading near a Reidemeister move of type III}
  \label{fig:R3threading}
\end{figure} 

Now we can arrange the diagrams so that $h$ separates $S$ and $F$, and so that the uppermost strand crosses over $h$ in a neighborhood of the move (the orientation of this strand determines whether it will cross $h$ at the top or bottom of the local picture).  Regardless then of the orientation on the other two strands or their shared crossing, the uppermost strand is free to pass over the crossing and both the nearby $S$ and $F$ points as in Figure \ref{fig:R3threading}, a move which can clearly be lifted to a braid isotopy in $\mathbb{R}^3$.  This completes the proof of Lemma \ref{lem:reidemeistermovebraid} and of Theorem \ref{thm:AlexanderMainTheorem}.
\end{proof}

Corollary~\ref{cor:braidedsurfaceswithcaps} now follows easily by combining Theorem~\ref{thm:AlexanderMainTheorem} with Lemma~\ref{lem:braidedcobordismtobraidedsurfacewithcaps}.

\begin{remark} 
Suppose now that the cobordism $W$ we start with is in ribbon position, i.e., has no local maximal points with respect to the $t$-coordinate.  Although we may hope to preserve this property during the braiding procedure described above, this will not be possible in general.  Indeed, Morton \cite{Morton1983} gave an example of a 4-strand braid $\beta$ with unknotted closure which is \emph{irreducible}, meaning any simplification of $\beta$ using Markov moves necessarily raises the braid index to 5.  As noted by Rudolph \cite{Rudolph1985}, it is not difficult to see that any braided ribbon cobordism bounded by the closure of $\beta$ must have genus $\geq 1$, even though it clearly bounds a ribbon embedded disk in $S^3 \times [0,1]$. 
\end{remark}

\bibliographystyle{plain}
\bibliography{bibliography}
\end{document}